\def\MatrixFont{\bf}
\def\VectorFont{\bf}
\newcommand{\mA}{{\MatrixFont A}}
\newcommand{\mB}{{\MatrixFont B}}
\newcommand{\mC}{{\MatrixFont C}}
\newcommand{\mE}{{\MatrixFont E}}
\newcommand{\mF}{{\MatrixFont F}}
\newcommand{\mI}{{\MatrixFont I}}
\newcommand{\mJ}{{\MatrixFont J}}
\newcommand{\mP}{{\MatrixFont P}}
\newcommand{\mQ}{{\MatrixFont Q}}
\newcommand{\mR}{{\MatrixFont R}}
\newcommand{\mS}{{\MatrixFont S}}
\newcommand{\mT}{{\MatrixFont T}}
\newcommand{\mV}{{\MatrixFont V}}
\newcommand{\mW}{{\MatrixFont W}}
\newcommand{\ve}{{\VectorFont e}}
\newcommand{\vu}{{\VectorFont u}}
\newcommand{\vv}{{\VectorFont v}}
\newcommand{\vw}{{\VectorFont w}}
\newcommand{\vx}{{\VectorFont x}}
\newcommand{\vy}{{\VectorFont y}}
\newcommand{\vz}{{\VectorFont z}}
\def\diag{\qopname\relax o{diag}}
\newtheorem{theorem}{Theorem}[section]
\newtheorem{lemma}[theorem]{Lemma}
\theoremstyle{definition}
\theoremstyle{remark}
\newtheorem{remark}[theorem]{Remark}
\theoremstyle{algorithm}
\newtheorem{algorithm}[theorem]{Algorithm}
\theoremstyle{corollary}
\theoremstyle{example}
\newtheorem{example}[theorem]{Example}
\title{Monte Carlo Set-Membership Filtering for Nonlinear Dynamic Systems}
\author{Zhiguo Wang,  Xiaojing Shen\thanks{This work was supported in part by  the open research funds of BACC-STAFDL of China
under Grant No. 2015afdl010,  the special funds of NEDD of China under Grant No. 201314, the NSF No. 61273074£© and the PCSIRT1273. Zhiguo Wang, Xiaojing Shen (corresponding author), Yunmin Zhu and Jianxin Pan are  with Department of Mathematics, Sichuan
University, Chengdu, Sichuan 610064, China. E-mail: wangzg315@126.com, shenxj@scu.edu.cn,  ymzhu@scu.edu.cn, jianxin.pan@manchester.ac.uk.}, ~Yunmin Zhu and Jianxin Pan}
\begin{document}
 \maketitle
\begin{abstract}
When underlying probability density functions of nonlinear dynamic systems are unknown, the filtering problem is known to be a challenging problem.
This paper attempts to make progress on this problem by proposing a new class of filtering methods in bounded noise setting via set-membership theory and Monte Carlo (boundary) sampling technique, called Monte Carlo set-membership filter. The set-membership prediction and measurement update are derived by recent convex optimization methods based on S-procedure and Schur complement. To guarantee the on-line usage, the nonlinear dynamics are linearized about the current estimate and the remainder terms are then bounded by an optimization ellipsoid, which can be described as a semi-infinite optimization problem. In general, it is an analytically intractable problem when dynamic systems are nonlinear.  However, for a typical nonlinear dynamic system in target tracking, we can analytically derive some regular properties for the remainder. Moreover, based on the remainder properties and the inverse function theorem, the semi-infinite optimization problem can be efficiently solved by Monte Carlo boundary sampling technique. Compared with the particle filter, numerical examples show that when the probability density functions of noises are unknown, the performance of the Monte Carlo set-membership filter is better than that of the particle filter.
\end{abstract}

\noindent{\bf keywords:} Nonlinear dynamic systems;  target tracking; set-membership filter; particle filter;  Monte Carlo set-membership filter.

\section{Introduction}\label{sec_1}

Filtering techniques for dynamic systems are widely used in applied fields such as target tracking, signal processing, automatic control, computer vision and economics, just to name a few. The Kalman filter \cite{Kalman60} is well known as the recursive best linear unbiased state estimator, which is clearly established
as a fundamental tool for analyzing and solving a broad class of filtering problems with linear dynamic systems. When dynamic systems are nonlinear, a few well-known generalizations are the extended Kalman filter (EKF), Gaussian sum filters and unscented Kalman filtering (UKF) (see, e.g., \cite{BarShalom-Li-Kirubarajan01,Simon06}). These methods are based on local linear approximations of the nonlinear system where the higher order terms are ignored.

Most recently, researchers have been attracted to a new class of filtering methods based on the sequential Monte Carlo approach for nonlinear and non-Gaussian dynamic systems. Sequential Monte Carlo methods achieve the filtering task by recursively generating weighted Monte Carlo samples of the state variables by importance sampling. The samples and their weights are then used to estimate expectation, covariance and other system characteristics. The earliest two methods is the \emph{particle filter} (also called the bootstrap filter) \cite{Gordon-Salmond-Smith93} and \emph{sequential imputation} for general missing data problems \cite{Kong-Liu-Wong94}. Subsequently, a lot of methods have been developed in different situations. A sequential importance sampling framework \cite{Liu-Chen98} has been proposed to unify and generalize these methods. Monte Carlo filtering techniques have caught the attention of researchers in many different fields. Many excellent results in different situations can be found in, e.g., \cite{Kotecha-Djuric03}, \cite{Crisan-Doucet02}, \cite{Arulampalam-Maskell-Gordon-Clapp02}, \cite{Chen-Wang-Liu00}, \cite{Zheng-Niu-Varshney14}, and references therein.  Most of these methods are based on the assumptions that probability density functions of the state noise and measurement noise are known. When underlying probability density functions (pdf) are unknown, the filtering problem for nonlinear dynamic systems is known to be a difficult problem.

Actually, when the underlying probabilistic assumptions are not realistic (e.g., the main perturbation may be deterministic), it seems more natural to assume that the state noise and measurement noise are unknown but bounded and to characterize
the set of all values of the parameter or state vector that are consistent with this hypothesis \cite{Polyak-Nazin-Durieu-Walter04}. The
set-membership estimation was considered first at end of 1960s and
early 1970s (see \cite{Schweppe68,Bertsekas-Rhodes71}). The idea of propagating bounding ellipsoids (or boxes,
polytopes, simplexes, parallelotopes, and polytopes) for systems
with bounded noises has also been extensively investigated, for example, see recent papers
\cite{Durieu-Walter-Polyak01,
Calafiore-ElGhaoui04,Polyak-Nazin-Durieu-Walter04,Shen-Zhu-Song-Luo11},
the book \cite{Jaulin-Kieffer-Didrit-Walter01}, and
references therein.
Most of these methods concentrate on the linear dynamic systems.

The set-membership filtering for nonlinear dynamic systems is known to be a challenging problem. Based on ellipsoid-bounded, fuzzy-approximated or Lipschitz-like nonlinearities, several results have been made \cite{Shamma-Tu97,Yang-Li10,Morrell-Stirlling03,Wei-Wang-Shen10}. These results assume that the ellipsoid bounds, the coefficients of fuzzy-approximation or Lipschitz constants are known before filtering, which limit them in real-time implementation. For example, for a typical nonlinear dynamic system in a radar, the bounds of the remainder depends on the past estimates so that they cannot be obtained before filtering. As far as we know, \cite{Scholte-Campbell03} develops a nonlinear set-membership filtering which can estimate ellipsoid bounds of nonlinearities in real-time and is capable of being on-line usage, and the filter is called the extended set-membership filter (ESMF). Specifically, the nonlinear dynamics are linearized about the current estimate and the state bounding ellipsoid is relaxed to an outer bounding box by the ellipsoid projection method, the remainder terms are then bounded using interval mathematics \cite{Moore66}, and finally the output interval box is bounded using an outer bounding ellipsoid by  minimizing the volume of the bounding ellipsoid. Moreover, the set-membership filtering algorithm is derived based on the linear set-membership filtering in the earliest work \cite{Schweppe68}. It is not difficult to see that the outer bounding ellipsoids of both the remainder and the state is conservative. The cumulative effect of the conservative bounding ellipsoid at each time step may yield disconvergence of a filtering.  In fact, if the state bounding ellipsoid were not relaxed to an outer bounding box by the ellipsoid projection method and using some recent linear set-membership filtering techniques \cite{ElGhaoui-Calafiore01},  it should be possible to derive the tighter outer bounding ellipsoids for both the remainder and the state of the nonlinear dynamic system. More details will be clarified in Remark \ref{rmk_3} and Figure \ref{fig_08}.

In this paper, when underlying pdfs of nonlinear dynamic systems are unknown,
we attempt to make progress on the corresponding filtering problem in the bounded noise setting. We propose a new class of filtering methods via set-membership estimation theory and Monte Carlo (boundary) sampling technique, denoted by MCSMF. The set-membership prediction and measurement update of MCSMF are derived by recent convex optimization methods based on S-procedure and Schur complement. To guarantee the on-line usage, the nonlinear dynamics are linearized about the current estimate and the remainder terms are then bounded by an ellipsoid, which can be described as a semi-infinite optimization problem. In general, it is an analytically intractable problem when dynamic systems are nonlinear.  However, for a typical nonlinear dynamic system in target tracking, we can analytically derive some regular properties for the remainder. Moreover, based on the remainder properties and the inverse function theorem, we prove that the boundary of  the remainder set must be from the the boundary of a set $\{||\vu_k||\leq1\}$ when we linearize the nonlinear equations by Taylor's Theorem. Thus, when we take samples from the set $\{||\vu_k||\leq1\}$, the samples on the boundary  $\{||\vu_k||=1\}$ are sufficient to derive the outer bounding ellipsoids of the remainder set. The samples in $\{||\vu_k||<1\}$ is not necessary. Therefore, the computation complexity can be reduced much more. Compared with the particle filter and ESMF in \cite{Scholte-Campbell03}, numerical examples show that when the probability density functions of noises are known, the performance of the particle filter is better than that of ESMF and MCSMF. Nevertheless, when the probability density functions of noises are unknown, the performance of MCSMF is better than that of the other two filters.

The rest of the paper is organized as follows. Preliminaries are given in Section \ref{sec_2}. In Section \ref{sec_3}, the prediction step and the  measurement update step of the set-membership filtering for nonlinear dynamic systems are derived by solving an SDP problem based on S-procedure and Schur complement, respectively. In Section \ref{sec_4}, the bounding ellipsoid of the remainder set is described as a semi-infinite optimization problem and the steps of MCSMF is summarized. In Section \ref{sec_5}, for a typical nonlinear dynamic system in target tracking, some regular properties for the remainder is derived. Based on the remainder properties and the inverse function theorem, the semi-infinite optimization problem can be efficiently solved by Monte Carlo boundary sampling technique. In Section  \ref{sec_6}, numerical examples are given and discussed. In
Section \ref{sec_7}, concluding remarks are provided.

\section{Preliminaries}\label{sec_2}
\subsection{Problem formulation}\label{sec_2_1}

We consider a nonlinear dynamic system
\begin{eqnarray}%
\label{Eqpre_1}\vx_{k+1}&=&f_k(\vx_k)+\vw_k,\\[3mm]
\label{Eqpre_2}\vy_k&=&h_k(\vx_k)+\vv_k,
\end{eqnarray}
where $\vx_k\in \mathcal {R}^n$ is the state of system at time $k$; $\vy_k\in \mathcal {R}^{n_1}$ is the measurement.
$f_k(\vx_k)$ and $h_k(\vx_k)$ are nonlinear functions of $\vx_k$,
$\vw_k\in \mathcal {R}^n$ is the uncertain process noise and $\vv_k\in \mathcal {R}^{n_1}$ is the uncertain measurement noise. They are assumed to be confined to specified ellipsoidal sets
\begin{eqnarray}
\nonumber\mW_k&=&\{\vw_k: \vw_k^T\mQ_k^{-1}\vw_k\leq1\}\\
\nonumber\mV_k&=&\{\vv_k: \vv_k^T\mR_k^{-1}\vv_k\leq1\},
\end{eqnarray}
where $\mQ_k$ and $\mR_k$ are the \emph{shape matrix} of the ellipsoids $\mW_k$ and $\mV_k$, respectively, which are known symmetric positive-definite matrices.
Moreover, we assume that when the nonlinear functions are linearized, the remainder terms can be bounded by an ellipsoid. Specifically,
by Taylor's Theorem, $f_k$ and $h_k$ can be linearized to
\begin{eqnarray}%
\label{Eqpre_3} f_k(\hat{\vx}_k+\mE_{f_k}\vu_k)=f_k(\hat{\vx}_k)+\mJ_{f_k}\mE_{f_k}\vu_k+\Delta f_k(\vu_k),\\[3mm]
\label{Eqpre_4} h_k(\hat{\vx}_k+\mE_{h_k}\vu_k)=h_k(\hat{\vx}_k)+\mJ_{h_k}\mE_{h_k}\vu_k+\Delta h_k(\vu_k),
\end{eqnarray}
where $\mJ_{f_k}=\frac{\partial f_k(\vx_k)}{\partial \vx}|_{\hat{\vx}_k}$ and $\mJ_{h_k}=\frac{\partial h_k(\vx_k)}{\partial \vx}|_{\hat{\vx}_k}$ are Jacobian matrices, $\Delta f_k(\vu_k)$ and $\Delta h_k(\vu_k)$ are high-order remainders, which can be bounded in an ellipsoid for all $||\vu_k||\leq1$, respectively, i.e.,
\begin{eqnarray}%
\label{Eqpre_155} \Delta f_k(\vu_k)\in  \mathcal {E}_{f_k} &=&\{\vx\in
R^n:(\vx-\ve_{f_k})^T{(\mP_{f_k})}^{-1}(\vx-\ve_{f_k})\leq1\},\\[3mm]
\label{Eqpre_154}&=&\{\vx\in R^n: \vx=\ve_{f_k}+\mB_{f_k}\Delta_{f_k}, \mP_{f_k}=\mB_{f_k}\mB_{f_k}^T, \parallel\Delta_{f_k}\parallel\leq1\},\\ \label{}
\label{Eqpre_152} \Delta h_k(\vu_k)\in  \mathcal {E}_{h_k} &=&\{\vx\in
R^{n_1}:(\vx-\ve_{h_k})^T{(\mP_{h_k})}^{-1}(\vx-\ve_{h_k})\leq1\},\\[3mm]
\label{Eqpre_153}&=&\{\vx\in R^{n_1}: \vx=\ve_{h_k}+\mB_{h_k}\Delta_{h_k}, \mP_{h_k}=\mB_{h_k}\mB_{h_k}^T, \parallel\Delta_{h_k}\parallel\leq1\},
\end{eqnarray}
where  $\ve_{f_k}$ and $\ve_{h_k}$ are the center of the ellipsoids $\mathcal {E}_{f_k}$ and $\mathcal {E}_{h_k}$, respectively; $\mP_{f_k}$ and $\mP_{h_k}$ are the shape matrices of the ellipsoids $\mathcal {E}_{f_k}$ and $\mathcal {E}_{h_k}$ respectively.
Note that we do not assume that the ellipsoids  $\mathcal {E}_{f_k}$ and $\mathcal {E}_{h_k}$ are given before filtering. Both of them are predicated in real time.

The corresponding set-membership filtering problem can be formulated as follows.
Suppose that the initial state $\vx_0$ belongs to a given bounding ellipsoid:
\begin{eqnarray}
\label{Eqpre_7} \mathcal {E}_0&=&\{\vx\in
R^n:(\vx-\hat{\vx}_0)^T(\mP_0)^{-1}(\vx-\hat{\vx}_0)\leq1\},
\end{eqnarray}
where  $\hat{\vx}_0$ is the center of ellipsoid $\mathcal {E}_0$; $\mP_0$ is the shape matrix of the ellipsoid $\mathcal {E}_0$ which is a known symmetric positive-definite matrix. At time $k$, given that $\vx_k$ belongs to a current bounding ellipsoid:
\begin{eqnarray}
\label{Eqpre_9}  \mathcal {E}_k&=&\{\vx\in
R^n:(\vx-\hat{\vx}_k)^T(\mP_k)^{-1}(\vx-\hat{\vx}_k)\leq1\}\\
\label{Eqpre_10}&=&\{\vx\in R^n: \vx=\hat{\vx}_k+\mE_k\vu, \mP_k=\mE_k\mE_k^T, \parallel\vu\parallel\leq1\},
\end{eqnarray}
where  $\hat{\vx}_k$ is the center of ellipsoid $\mathcal {E}_k$; $\mP_k$ is a known symmetric positive-definite matrix.

The goal of the set-membership filtering is to determine a bounding ellipsoid $\mathcal {E}_{k+1}$ based on the measurement $\vy_{k+1}$ at time $k+1$, i.e, look for $\hat{\vx}_{k+1}, \mP_{k+1}$ such that the state $\vx_{k+1}$ belongs to
\begin{eqnarray}
\label{Eqpre_11}\mathcal {E}_{k+1} &=&\{\vx\in
R^n:(\vx-\hat{\vx}_{k+1})^T{(\mP_{k+1})}^{-1}(\vx-\hat{\vx}_{k+1})\leq1\},
\end{eqnarray}
whenever I) $\vx_k$ is in $\mathcal {E}_k$, II) the process and measurement noises $\vw_k, \vv_{k+1}$ are bounded in ellipsoids, i.e. $\vw_k\in\mW_k$, $\vv_{k+1}\in\mV_{k+1}$, and III) the remainders $\Delta f_k(\vu_k)\in  \mathcal {E}_{f_k} $  and $\Delta h_k(\vu_k)\in  \mathcal {E}_{h_k}$.
The key problem is how to determine the bounding ellipsoids $\mathcal {E}_{f_k}$  and $\mathcal {E}_{h_k}$ in real-time so that the filtering algorithm can be on-line usage.

Moreover, we provide a state estimation ellipsoid  by minimizing its ``size" which is a function of the shape matrix
$P$ and is denoted by $f(P)$. It is well known that $tr(P)$ corresponds to the sum of squares of semiaxes lengths
of the ellipsoid, and $logdet(P)$ is related to the volume of the ellipsoid. More discussion on  size of the ellipsoid can be seen in \cite{Shen-Zhu-Song-Luo11}.

\section{Set-membership prediction and measurement update}\label{sec_3}
In this section, we derive the prediction step and the measurement step of the set-membership filtering.
Both of them can be converted to solve an SDP problem based on S-procedure and Schur complement. The main results are summarized to  Theorems 1-2. The proofs are given in Appendix.

\subsection{Prediction step}

\begin{theorem}\label{thm_3}
At time $k+1$, based on measurements $\vy_{k}$, the bounding ellipsoids $\mathcal {E}_{f_k}$ and $\mathcal {E}_{h_k}$,
a predicted bounding ellipsoid $ \mathcal
{E}_{k+1|k}=\{\vx:(\vx-\hat{\vx}_{k+1|k})^T(\mP_{k+1|k})^{-1}(\vx-\hat{\vx}_{k+1|k})\leq1\}$
can be obtained by solving the optimization problem in the variables
$\mP_{k+1|k}$, $\hat{\vx}_{k+1|k}$,  nonnegative auxiliary variables $\tau^u\geq0, \tau^w\geq0,
\tau^v\geq0,\tau^f\geq0, \tau^h\geq0$,
\begin{eqnarray}
\label{Eqpre_101} &&\min~~ f(\mP_{k+1|k}) \\[5mm]
\label{Eqpre_102}&&~~\mbox{subject to}~~ -\tau^u\leq0,~ -\tau^w\leq0, ~-\tau^v\leq0,
 -\tau^f\leq0,~ -\tau^h\leq0,\\[5mm]
\label{Eqpre_103} && -\mP_{k+1|k}\prec0,\\[5mm]
\label{Eqpre_104}&&\left[\begin{array}{cc}
    -\mP_{k+1|k}&\Phi_{k+1|k}(\hat{\vx}_{k+1|k})(\Psi_{k+1|k}(\vy_{k}))_{\bot}\\[3mm]
    (\Phi_{k+1|k}(\hat{\vx}_{k+1|k})(\Psi_{k+1|k}(\vy_{k}))_{\bot})^T& ~~-(\Psi_{k+1|k}(\vy_{k}))_{\bot}^T\Xi(\Psi_{k+1|k}(\vy_{k}))_{\bot}\\
\end{array}\right]\preceq0,
\end{eqnarray}
where
\begin{eqnarray}
 \label{Eqpre_105} \Phi_{k+1|k}(\hat{\vx}_{k+1|k})&=&[f_k(\hat{\vx}_{k})+\ve_{f_k}-\hat{\vx}_{k+1|k},~\mJ_{f_k}\mE_{k},
 \mI, ~0, ~\mB_{f_k}, ~0], ~~0\in\mathcal {R}^{n,n_1},\\[3mm]
 \label{Eqpre_106}\Psi_{k+1|k}(\vy_{k})&=& [h_{k}(\hat{\vx}_{k})+\ve_{h_{k}}-\vy_{k}, ~\mJ_{h_{k}}\mE_k, ~0,
~\mI, ~0, ~\mB_{h_{k}}].
\end{eqnarray}
$(\Psi_{k+1|k}(\vy_{k}))_{\bot}$ is the orthogonal complement of $\Psi_{k+1|k}(\vy_{k})$.
 $\mE_{k}$ is the Cholesky factorization of $\mP_{k}$, i.e, $\mP_{k}=\mE_{k}(\mE_{k})^T$. $\ve_{f_k}$, $\ve_{h_k}$, $\mB_{f_k}$, $\mB_{h_k}$ are denoted by (\ref{Eqpre_154}) and (\ref{Eqpre_153}), respectively. $\mJ_{f_k}=\frac{\partial f_k(\vx_k)}{\partial \vx}|_{\hat{\vx}_k}$ and $\mJ_{h_k}=\frac{\partial h_k(\vx_k)}{\partial \vx}|_{\hat{\vx}_k}$.
\begin{eqnarray}
  \label{Eqpre_107}\Xi &=&\diag(1-\tau^u-\tau^w-\tau^v-\tau^f-\tau^h,\tau^uI,\tau^w\mQ_k^{-1}, \tau^v\mR_k^{-1},\tau^fI,\tau^hI).
\end{eqnarray}
\end{theorem}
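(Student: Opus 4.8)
The plan is to reformulate the requirement ``$\vx_{k+1|k}\in\mathcal{E}_{k+1|k}$ whenever $\vx_k\in\mathcal{E}_k$, $\vw_k\in\mW_k$, and $\Delta f_k(\vu_k)\in\mathcal{E}_{f_k}$'' as a robust quadratic feasibility condition, then discharge it with the S-procedure and rewrite the resulting matrix inequality with a Schur complement. First I would collect all uncertainty into a single vector. Writing $\vx_k=\hat{\vx}_k+\mE_k\vu$ with $\|\vu\|\le1$, $\vw_k=\vw_k$ with $\vw_k^T\mQ_k^{-1}\vw_k\le1$, and $\Delta f_k=\ve_{f_k}+\mB_{f_k}\Delta_{f_k}$ with $\|\Delta_{f_k}\|\le1$ (and similarly carrying along the measurement-side quantities $\vv_k,\Delta h_k$, which enter because the predicted ellipsoid is allowed to use $\vy_k$), the propagated state is an affine function of the stacked vector $\vzeta=[1,\vu^T,\vw_k^T,\vv_k^T,\Delta_{f_k}^T,\Delta_{h_k}^T]^T$, namely $\vx_{k+1|k}-\hat{\vx}_{k+1|k}=\Phi_{k+1|k}(\hat{\vx}_{k+1|k})\,\vzeta$, which is exactly why $\Phi_{k+1|k}$ is defined as in (\ref{Eqpre_105}). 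The measurement $\vy_k=h_k(\hat{\vx}_k)+\mJ_{h_k}\mE_k\vu+\ve_{h_k}+\mB_{h_k}\Delta_{h_k}+\vv_k$ imposes the linear constraint $\Psi_{k+1|k}(\vy_k)\,\vzeta=0$, so the free part of $\vzeta$ lives in the range of the orthogonal complement $(\Psi_{k+1|k}(\vy_k))_\bot$; substituting $\vzeta=(\Psi_{k+1|k}(\vy_k))_\bot\,\vxi$ for a free $\vxi$ eliminates the equality constraint.

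Next I would express membership in $\mathcal{E}_{k+1|k}$ and each of the five bounding constraints as quadratic forms in $\vxi$. The target $(\vx_{k+1|k}-\hat{\vx}_{k+1|k})^T\mP_{k+1|k}^{-1}(\vx_{k+1|k}-\hat{\vx}_{k+1|k})\le1$ becomes $\vxi^T\big[(\Psi_\bot)^T\Phi^T\mP_{k+1|k}^{-1}\Phi\,\Psi_\bot\big]\vxi\le1$, while the constraints $\|\vu\|^2\le1$, $\vw_k^T\mQ_k^{-1}\vw_k\le1$, $\vv_k^T\mR_k^{-1}\vv_k\le1$, $\|\Delta_{f_k}\|^2\le1$, $\|\Delta_{h_k}\|^2\le1$ are each of the form $\vxi^T(\Psi_\bot)^T M_i \Psi_\bot\,\vxi \le 1$ for the obvious selector-type matrices $M_i$; together with the normalization ``first coordinate $=1$'' these assemble into the block-diagonal matrix $\Xi$ of (\ref{Eqpre_107}) once the multipliers $\tau^u,\dots,\tau^h$ are attached. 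By the (lossless, for this many constraints in the usual homogenized form, or lossy-but-sufficient in general) S-procedure, a sufficient condition for the implication to hold for all admissible $\vxi$ is the existence of nonnegative $\tau^u,\tau^w,\tau^v,\tau^f,\tau^h$ with
\[
(\Psi_\bot)^T\Phi^T\mP_{k+1|k}^{-1}\Phi\,\Psi_\bot \;-\; (\Psi_\bot)^T\Xi'\,\Psi_\bot \;\preceq\;0,
\]
where $\Xi'$ gathers the $\tau$-weighted constraint matrices and the $1-\sum\tau$ term in the $(1,1)$ block; this is precisely $-(\Psi_\bot)^T\Xi\,\Psi_\bot$ on the right side of (\ref{Eqpre_104}). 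Finally, applying the Schur complement with respect to the (positive definite, hence the constraint (\ref{Eqpre_103})) block $\mP_{k+1|k}$ converts the quadratic-in-$\mP_{k+1|k}^{-1}$ inequality into the linear matrix inequality (\ref{Eqpre_104}); minimizing $f(\mP_{k+1|k})$ over all feasible $(\mP_{k+1|k},\hat{\vx}_{k+1|k},\tau^{\bullet})$ then yields the smallest such predicted ellipsoid, which is the assertion of the theorem.

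I expect the main obstacle to be bookkeeping rather than a deep idea: one must verify that the affine substitution genuinely produces $\Phi_{k+1|k}$ and $\Psi_{k+1|k}$ exactly as displayed (including the placement of the zero blocks $0\in\mathcal{R}^{n,n_1}$ that reflect which uncertainties enter the state versus the measurement), and one must be careful that the orthogonal-complement reduction is equivalent to the original constrained problem and does not discard feasible points. A secondary subtlety is the direction of the S-procedure: with five quadratic constraints the S-procedure is only sufficient in general, so the theorem should be read as producing a valid (possibly conservative) bounding ellipsoid — I would state this explicitly, and note that the homogenization via the ``$1$'' coordinate together with the sign pattern of $\Xi$ is what makes the passage from the implication to the LMI correct. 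The remaining steps — expanding the quadratic forms, collecting the block-diagonal structure of $\Xi$, and the Schur-complement rearrangement — are routine linear algebra and would be deferred to the Appendix as the paper indicates.
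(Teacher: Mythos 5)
Your proposal is correct and follows essentially the same route as the paper's Appendix proof: stack the uncertainties into $\xi=[1,\vu_k^T,\vw_k^T,\vv_k^T,\Delta_{f_k}^T,\Delta_{h_k}^T]^T$, express the state propagation and the measurement as $\Phi_{k+1|k}\xi$ and $\Psi_{k+1|k}\xi=0$, apply the (sufficient-only) S-procedure to the homogenized quadratic constraints, and finish with a Schur complement. The only cosmetic difference is that you eliminate the equality constraint up front by parametrizing its null space via $(\Psi_{k+1|k})_\bot$, whereas the paper first attaches a sign-free multiplier $\tau^y$ to $\Psi^T\Psi$ and then passes to the orthogonal complement; the two are equivalent and yield the same LMI (\ref{Eqpre_104}).
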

\textbf{Proof:} See Appendix.

\subsection{Measurement update step}
\begin{theorem}\label{thm_4}
At time $k+1$, based on measurements $\vy_{k+1}$, the predicted bounding ellipsoid $ \mathcal{E}_{k+1|k}$ and the bounding ellipsoid $\mathcal {E}_{h_{k+1}}$,
a bounding ellipsoid $ \mathcal
{E}_{k+1}=\{\vx:(\vx-\hat{\vx}_{k+1})^T(\mP_{k+1})^{-1}(\vx-\hat{\vx}_{k+1})\leq1\}$
can be obtained by solving the optimization problem in the variables
$\mP_{k+1}$, $\hat{\vx}_{k+1}$,  nonnegative auxiliary variables $\tau^u\geq0,
\tau^v\geq0, \tau^h\geq0$,
\begin{eqnarray}
\label{Eqpre_129} &&\min~~ f(\mP_{k+1}) \\[5mm]
\label{Eqpre_130} &&~~\mbox{subject to}~~ -\tau^u\leq0,~-\tau^v\leq0,~ -\tau^h\leq0,\\[5mm]
\label{Eqpre_131} && -\mP_{k+1}\prec0,\\[5mm]
\label{Eqpre_132}&&\left[\begin{array}{cc}
    -\mP_{k+1}&\Phi_{k+1}(\hat{\vx}_{k+1})(\Psi_{k+1}(\vy_{k+1}))_{\bot}\\[3mm]
    (\Phi_{k+1}(\hat{\vx}_{k+1})(\Psi_{k+1}(\vy_{k+1}))_{\bot})^T& ~~-(\Psi_{k+1}(\vy_{k+1}))_{\bot}^T\Xi(\Psi_{k+1}(\vy_{k+1}))_{\bot}\\
\end{array}\right]\preceq0,
\end{eqnarray}
where
\begin{eqnarray}
\label{Eqpre_133}  \Phi_{k+1}(\hat{\vx}_{k+1})&=&[\hat{\vx}_{k+1|k}-\hat{\vx}_{k+1},
 \mE_{k+1|k}, ~0, ~0], ~~0\in\mathcal {R}^{n,n_1}, \\[3mm]
\label{Eqpre_134} \Psi_{k+1}(\vy_{k+1})&=& [h_{k+1}(\hat{\vx}_{k+1|k})+\ve_{h_{k+1}}-\vy_{k+1},
\mJ_{h_{k+1|k}}\mE_{k+1|k}, ~\mI, ~\mB_{h_{k+1}}].
\end{eqnarray}
$(\Psi_{k+1}(\vy_{k+1}))_{\bot}$ is the orthogonal complement of $\Psi_{k+1}(\vy_{k+1})$.
 $\mE_{k+1|k}$ is the Cholesky factorization of $\mP_{k+1|k}$, i.e, $\mP_{k+1|k}=\mE_{k+1|k}(\mE_{k+1|k})^T$. $\hat{\vx}_{k+1|k}$ is the center of the predicted bounding ellipsoid $ \mathcal{E}_{k+1|k}$. $\ve_{h_{k+1}}$ and $\mB_{h_{k+1}}$ are denoted by $(\ref{Eqpre_153})$ at the time step $k+1$.
 $\mJ_{h_{k+1|k}}=\frac{\partial h_{k+1}(\vx_k)}{\partial \vx}|_{\hat{\vx}_{k+1|k}}$.
\begin{eqnarray}
\label{Eqpre_135} \Xi =\diag(1-\tau^u-\tau^v-\tau^h,\tau^u\mI,\tau^v\mR_{k+1}^{-1},\tau^hI).
\end{eqnarray}
\end{theorem}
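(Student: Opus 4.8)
The plan is to recast the measurement update as a robust quadratic feasibility problem in a single augmented uncertainty vector, and then to dualize it through the S-procedure and linearize it via the Schur complement, exactly as for the prediction step of Theorem~\ref{thm_3} but with the predicted data replacing the time-$k$ data and the process-noise block removed.

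First I would parametrize all admissible quantities. Since $\vx_{k+1}\in\mathcal{E}_{k+1|k}$, write $\vx_{k+1}=\hat{\vx}_{k+1|k}+\mE_{k+1|k}\vu$ with $\|\vu\|\le1$ and $\mP_{k+1|k}=\mE_{k+1|k}\mE_{k+1|k}^{T}$; since $\vv_{k+1}\in\mV_{k+1}$ and $\Delta h_{k+1}(\vu)\in\mathcal{E}_{h_{k+1}}$, use $\vv_{k+1}^{T}\mR_{k+1}^{-1}\vv_{k+1}\le1$ and the parametrization $\Delta h_{k+1}(\vu)=\ve_{h_{k+1}}+\mB_{h_{k+1}}\Delta_{h_{k+1}}$, $\|\Delta_{h_{k+1}}\|\le1$, from (\ref{Eqpre_153}). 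Substituting the Taylor expansion of $h_{k+1}$ about the predicted center $\hat{\vx}_{k+1|k}$ (cf. (\ref{Eqpre_4})) into the measurement equation (\ref{Eqpre_2}) and collecting terms, one verifies that the augmented vector $\vz=[1,\ \vu^{T},\ \vv_{k+1}^{T},\ \Delta_{h_{k+1}}^{T}]^{T}$ obeys the linear identity $\Psi_{k+1}(\vy_{k+1})\vz=0$ with $\Psi_{k+1}$ as in (\ref{Eqpre_134}), while the estimation error is the linear image $\vx_{k+1}-\hat{\vx}_{k+1}=\Phi_{k+1}(\hat{\vx}_{k+1})\vz$ with $\Phi_{k+1}$ as in (\ref{Eqpre_133}). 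Hence $\mathcal{E}_{k+1}$ contains every admissible $\vx_{k+1}$ precisely when $\vz^{T}\Phi_{k+1}^{T}\mP_{k+1}^{-1}\Phi_{k+1}\vz\le1$ for every $\vz$ with $\vz_{1}=1$, $\Psi_{k+1}(\vy_{k+1})\vz=0$, and the three block-diagonal quadratic bounds $\vz^{T}\diag(-1,\mI,0,0)\vz\le0$, $\vz^{T}\diag(-1,0,\mR_{k+1}^{-1},0)\vz\le0$, $\vz^{T}\diag(-1,0,0,\mI)\vz\le0$, the four blocks being sized conformally with $\vz$. (Treating $\Delta_{h_{k+1}}$ as an independent unit-norm uncertainty, rather than one determined by $\vu$, is the deliberate ellipsoidal relaxation.)

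Next I would homogenize and reduce. The normalization $\vz_{1}=1$ may be dropped, since $\vz_{1}=0$ forces $\vz=0$ through the quadratic bounds while any other $\vz$ rescales; it thus suffices to require $\vz^{T}\Phi_{k+1}^{T}\mP_{k+1}^{-1}\Phi_{k+1}\vz\le\vz^{T}\diag(1,0,0,0)\vz$ for all $\vz$ with $\Psi_{k+1}(\vy_{k+1})\vz=0$ and the three quadratic bounds. Eliminating the equality by substituting $\vz=(\Psi_{k+1}(\vy_{k+1}))_{\bot}\vt$, with $\vt$ free and the columns of $(\Psi_{k+1}(\vy_{k+1}))_{\bot}$ spanning the null space of $\Psi_{k+1}(\vy_{k+1})$, reduces this to an implication among quadratic forms in $\vt$. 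The S-procedure gives a sufficient condition for it, namely the existence of multipliers $\tau^{u},\tau^{v},\tau^{h}\ge0$ with
\[
(\Psi_{k+1}(\vy_{k+1}))_{\bot}^{T}\bigl(\Phi_{k+1}^{T}\mP_{k+1}^{-1}\Phi_{k+1}-\Xi\bigr)(\Psi_{k+1}(\vy_{k+1}))_{\bot}\preceq0,
\]
because $\diag(1,0,0,0)$ together with the three weighted constraint forms $\tau^{u}\diag(-1,\mI,0,0)$, $\tau^{v}\diag(-1,0,\mR_{k+1}^{-1},0)$, $\tau^{h}\diag(-1,0,0,\mI)$ assembles exactly into the matrix $\Xi$ of (\ref{Eqpre_135}). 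Since $\mP_{k+1}\succ0$, the Schur complement lemma converts this into the LMI (\ref{Eqpre_132}); adjoining $-\mP_{k+1}\prec0$, the sign constraints on $\tau^{u},\tau^{v},\tau^{h}$, and the objective $f(\mP_{k+1})$ gives the stated SDP. Any feasible point yields a valid bounding ellipsoid $\mathcal{E}_{k+1}$, and an optimal point the smallest one in the chosen size measure.

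The only delicate points are in the reduction step: one must confirm that dropping $\vz_{1}=1$ is lossless, that the columns of $(\Psi_{k+1}(\vy_{k+1}))_{\bot}$ realize exactly the solutions of $\Psi_{k+1}(\vy_{k+1})\vz=0$, and that with three quadratic constraints the S-procedure is invoked only as a sufficient condition -- which is all that is needed, since the theorem asserts merely that \emph{a} bounding ellipsoid is obtained. The remaining work -- the block bookkeeping pinning down $\Phi_{k+1}$ and $\Psi_{k+1}$, the verification that the weighted constraint forms combine with $\diag(1,0,0,0)$ into $\Xi$, and the Schur step -- is routine and parallels the proof of Theorem~\ref{thm_3}.
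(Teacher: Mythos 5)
Your proposal is correct and follows essentially the same route as the paper's proof: parametrize the admissible $\vx_{k+1}$, $\vv_{k+1}$, $\Delta_{h_{k+1}}$, stack them into the augmented vector satisfying $\Psi_{k+1}(\vy_{k+1})\xi=0$ and $\vx_{k+1}-\hat{\vx}_{k+1}=\Phi_{k+1}(\hat{\vx}_{k+1})\xi$, apply the S-procedure to the resulting quadratic implication, and convert to the LMI via the Schur complement. The only cosmetic difference is that you eliminate the equality constraint by substituting the null-space parametrization before invoking the S-procedure, whereas the paper first adds a free multiplier $\tau^{y}\Psi_{k+1}^{T}\Psi_{k+1}$ and then projects onto the orthogonal complement; both yield the identical projected inequality (and your explicit justification of the homogenization and of the sufficiency-only use of the S-procedure is sound).
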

\textbf{Proof:} See Appendix.
\begin{remark}
Notice that  if
$f(P)=tr(P)$, the optimization problem in Theorems \ref{thm_3}-\ref{thm_4} is an SDP problem. If
$f(P)=\mbox{logdet(P)}$, it is a MAXDET problem. Both of them can
also be efficiently solved in polynomial-time by interior point
methods for convex programming (see, e.g.,
\cite{Calafiore-ElGhaoui04,Nesterov-Nemirovski94}) and related softwares
\cite{Lofberg04,Sturm99}.
\end{remark}

\section{Monte Carlo Set Membership Filtering}
In this section, we discuss the key problem that how to adaptively determine a bounding ellipsoid to cover the high-order remainders. In the first subsection, for the general case, the problem can be converted to solve a SDP problem via Monte Carlo sampling. Moreover,  the Monte Carlo set membership filtering is presented. In the second subsection, for target tracking, we prove that the remainder can be bounded via Monte Carlo boundary sampling. Thus, the computation complexity  Algorithm \ref{alg_1} can be reduced much more.

\subsection{Ellipsoid bounding of the remainder via Monte Carlo sampling}\label{sec_4}
By (\ref{Eqpre_3})-(\ref{Eqpre_4}), the high-order remainders are
\begin{eqnarray}
 \nonumber \Delta f_k(\vu_k)=f_k(\hat{\vx}_k+\mE_k\vu_k)-f_k(\hat{\vx}_k)-\mJ_{f_k}\mE_k\vu_k,\\
 \nonumber \Delta h_k(\vu_k)=h_k(\hat{\vx}_k+\mE_k\vu_k)-h_k(\hat{\vx}_k)-\mJ_{h_k}\mE_k\vu_k,
\end{eqnarray}
whenever  $\parallel \vu_k\parallel\leq 1$. Obviously, it is a hard problem to cover a remainder by an ellipsoid since $f_k$ and $h_k$ are generally nonlinear functions.
The outer bounding ellipsoid for $\Delta f_k(\vu_k)$ is not uniquely defined, but which can be optimized by minimizing the size $f(P)$ of the bounding ellipsoid.
Thus, the optimization problem for the bounding ellipsoid of $\Delta f_k(\vu_k)$ can be written as
\begin{eqnarray}
\label{Eqpre_48} &&\min~~ f(\mP_{f_k}) \\[5mm]
\label{Eqpre_49} && \mbox{subject to}~ (\Delta f_k(\vu_k)-\ve_{f_k})^T(\mP_{f_k})^{-1}
(\Delta f_k(\vu_k)-\ve_{f_k})\leq 1, \mbox{for~ all}~ ||\vu_k||\leq 1.
\end{eqnarray}
where $\mP_{f_k}=\mB_{f_k}\mB_{f_k}^T$, and $\ve_{f_k}$, $\mP_{f_k}$ are decision variables.
It is called a semi-infinite optimization problem by \cite{Boyd-Vandenberghe04}.

For a general nonlinear dynamic system, to solve the problem (\ref{Eqpre_48}), we may use Monte Carlo  sampling by uniformly taking some samples from the boundary and interior-points of the sphere $||\vu_k||\leq 1$ so that we can get a finite set of  $\vu_k^1,\ldots,\vu_k^N$, then the infinite constraint (\ref{Eqpre_49}) can be approximated by $N$ constraints based on $\vu_k^1,\ldots,\vu_k^N$.
Moreover, by Schur complement, an approximate bounding ellipsoid for $\Delta f_k(\vu_k)$ can be derived by solving the flowing SDP optimization problem:
\begin{eqnarray}
\label{Eqpre_52} &&\min~~ f(\mP_{f_k}) \\[5mm]
\label{Eqpre_53}&&
 \mbox{subject to}~ \left[
                   \begin{array}{cc}
                     -1 & (\Delta f_k(\vu_k^i)-\ve_{f_k})^T \\
                    \Delta f_k(\vu_k^i)-\ve_{f_k} & -\mP_{f_k} \\
                   \end{array}
                 \right]    \prec 0,  i=1,\ldots,N.
\end{eqnarray}

Similarly, the outer bounding ellipsoid for $h_k(\vu_k)$ can be derived by solving
\begin{eqnarray}
\label{Eqpre_59} &&\min~~ f(\mP_{h_{k}}) \\[5mm]
 \label{Eqpre_60} &&
 \mbox{subject to}~ \left[                   \begin{array}{cc}
                     -1 & (\Delta h_{k}(\vu_k^i)-\ve_{h_{k}})^T \\
                    \Delta h_{k}(\vu_k^i)-\ve_{h_{k}} & -\mP_{h_{k}} \\
                   \end{array}
                 \right]  \prec 0,  i=1,\ldots,N.
\end{eqnarray}

\begin{remark}
The problem (\ref{Eqpre_52}) is an SDP problem that can be efficiently solved using modern interior-point methods, which have been developed by \cite{Nesterov-Nemirovski94} and \cite{Vandenberghe-Boyd96}.
When large number of samples are required to guarantee the bounding ellipsoid contain the remainder, the one-order optimizing algorithm \cite{Selin15} may be used for solving the problem (\ref{Eqpre_52})  with a lower computation complexity.
In addition, in the next subsection, we will develop boundary sampling technique for target tracking, where  the samples on boundary are sufficient to derive the outer bounding ellipsoids of the remainder. Thus,  computation complexity can be reduced much more. Numerical examples show that only 50 uniform samples on the boundary  are enough to guarantee the bounding ellipsoid contain the remainder. 
\end{remark}


\begin{remark}\label{rmk_3}
Note that the bounding ellipsoid of \cite{Scholte-Campbell03} is derived by interval mathematics. We  derive the bounding ellipsoid by solving a semi-infinite optimization problem. Figure \ref{fig_08} illustrates the difference of two methods. It is obvious to see that the bounding ellipsoid derived by solving the SDP (\ref{Eqpre_52})  is tighter than that obtained by interval mathematics. The cumulative effect of the conservative bounding ellipsoid at each time step may yield disconvergence of a filtering.
\end{remark}

\begin{figure}[h]
\vbox to 8cm{\vfill \hbox to \hsize{\hfill
\scalebox{0.6}[0.6]{\includegraphics{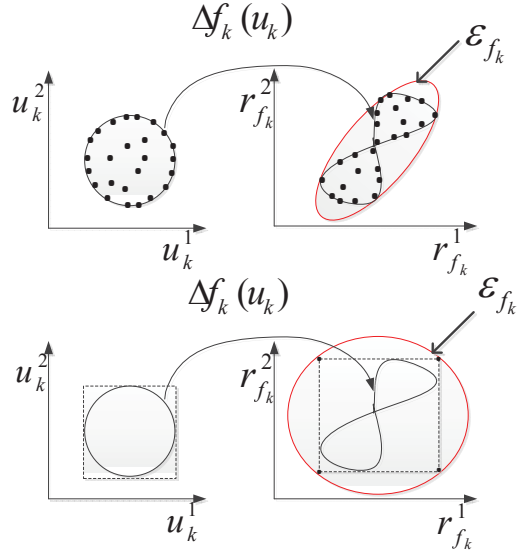}} \hfill}\vfill}
\caption{(top) The bounding ellipsoid is derived by covering the solid points of the remainder which are obtained by Monte Carlo sampling; (bottom) The bounding ellipsoid is derived by covering the vertices of the rectangle obtained by interval mathematics \cite{Scholte-Campbell03}.}\label{fig_08}
\end{figure}

Based on Theorems \ref{thm_3}--\ref{thm_4} and the ellipsoids derived by solving the SDP optimization problems (\ref{Eqpre_52})-(\ref{Eqpre_53}), (\ref{Eqpre_59})-(\ref{Eqpre_60}), the filtering algorithm can be summarized as follows:

\begin{algorithm}[Monte Carlo Set Membership Filtering]\label{alg_1}
~\\
\begin{itemize}
\item Step~1: (Initialization step) Set $k=0$ and  initial values $(\hat{\vx}_0,\mP_0)$ such that $\vx_0\in\mathcal {E}_0$.
\item Step~2: (Bounding step) Take samples $\vu_k^1,\ldots,\vu_k^N$ from the sphere $||\vu_k||\leq 1$, and then determine two bounding ellipsoids to cover the remainders  $\Delta f_k$ and $\Delta h_k$ by (\ref{Eqpre_52})-(\ref{Eqpre_53}) and (\ref{Eqpre_59})-(\ref{Eqpre_60}), respectively.
\item Step~3: (Prediction step) Optimize the center and shape matrix of the state prediction ellipsoid\\ $(\hat{\vx}_{k+1|k},\mP_{k+1|k})$ such that $\vx_{k+1|k}\in\mathcal {E}_{k+1|k}$ by solving the optimization problem (\ref{Eqpre_101})-(\ref{Eqpre_104}).
\item Step~4: (Bounding step) Take samples $\vu_{k+1|k}^1,\ldots,\vu_{k+1|k}^N$ from the sphere $||\vu_{k+1|k}||\leq 1$, and then determine one bounding ellipsoid to cover the remainder  $\Delta h_{k+1|k}$ by  (\ref{Eqpre_59})-(\ref{Eqpre_60}).
 \item Step~5: (Measurement update step) Optimize the center and shape matrix of the state estimation ellipsoid $(\hat{\vx}_{k+1},\mP_{k+1})$ such that $\vx_{k+1}\in\mathcal {E}_{k+1}$ by solving the optimization problem  (\ref{Eqpre_129})-(\ref{Eqpre_132}).
 \item Step~6: Set $k=k+1$ and go to step~2.
\end{itemize}
\end{algorithm}
A flowchart of the Algorithm \ref{alg_1} is given in Figure \ref{fig_09}.

\begin{figure}[h]
\vbox to 8cm{\vfill \hbox to \hsize{\hfill
\scalebox{0.5}[0.45]{\includegraphics{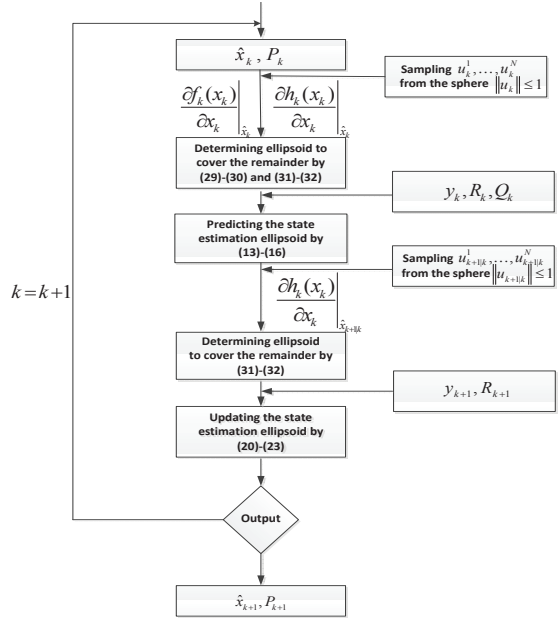}} \hfill}\vfill}
\caption{The flowchart of Algorithm \ref{alg_1}.}\label{fig_09}
\end{figure}

\subsection{Monte Carlo set membership filtering for target tracking}\label{sec_5}

In this subsection, for a typical nonlinear dynamic system in target tracking, we discuss that the remainder can be bounded by an ellipsoid via Monte Carlo \emph{boundary} sampling for target tracking.  We prove that the boundary of  the remainder set $\{\Delta h_{k+1}(\vu_k): ||\vu_k||\leq1\}$ must be from the the boundary of the sphere $\{||\vu_k||\leq1\}$ when we linearize the nonlinear equations by Taylor's Theorem. Thus, when we take samples from the set $\{||\vu_k||\leq1\}$, the samples on the boundary  $\{||\vu_k||=1\}$ are sufficient to derive the outer bounding ellipsoids of the remainder set. Therefore, the computation complexity in the bounding steps of Algorithm \ref{alg_1} can be reduced much more.

Let us consider the following nonlinear measurement equation \cite{BarShalom-Li-Kirubarajan01}:
\begin{eqnarray}
\label{Eq5_1} h(\vx)=\left[
                      \begin{array}{c}
                        \sqrt{(\vx(1)-a)^2+(\vx(2)-b)^2} \\[3mm]
                        arctan\left(\frac{\vx(2)-b}{\vx(1)-a}\right) \\
                      \end{array}
                    \right], a, b \in \mathcal {R}
\end{eqnarray}
where $\vx$ is a four-dimensional state variable that includes
position and velocity $(x, y, \dot{x}, \dot{y})$.
Note that the $h(\vx)$ only depends on the first two dimensions $\vx(1)$ and $\vx(2)$.

We discuss the relationship between the set $\{||\vu_k||\leq 1, \vu_k= [\vu_k(1) ~\vu_k(2)]\}$ and the remainder set $\{\Delta h_{k+1}(\vu_k): ||\vu_k||\leq1\}$.

%
\begin{theorem}\label{thm_5}
If we let the remainder $g(\vu)=h(\vx+\mE\vu)-h(\vx)-\mJ_h\mE\vu$ where h(\vx) is defined in (\ref{Eq5_1}), $\mE$ is a Cholesky factorization of a positive-definite $\mP$ such that $\{\vx+\mE\vu:\parallel u\parallel\leq 1\}$  is not intersect with the radial $\vx(1)<=a, \vx(2)=b$, then
the boundary of the remainder set $\mS=\{g(\vu): \parallel\vu \parallel\leq 1\}$  belongs to the set $\{g(\vu): \parallel \vu \parallel= 1\}$.
\end{theorem}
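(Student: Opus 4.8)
The plan is to invoke the inverse function theorem to show that $g$ is a local diffeomorphism on the open unit ball, hence an open map there, so that no point $g(\vu)$ with $\|\vu\|<1$ can be a boundary point of the image $\mS$. First I would reduce to two dimensions: since $h(\vx)$ depends only on $\vx(1),\vx(2)$, after restricting to the relevant coordinates the remainder $g(\vu)=h(\vx+\mE\vu)-h(\vx)-\mJ_h\mE\vu$ is effectively a smooth map from (the projection of) the unit ball to $\mathcal{R}^2$; the hypothesis that $\{\vx+\mE\vu:\|\vu\|\le1\}$ does not meet the ray $\vx(1)\le a,\ \vx(2)=b$ guarantees that $h$ — and therefore $g$ — is $C^\infty$ on a neighbourhood of the closed ball, since that ray is exactly where $\sqrt{(\cdot)^2+(\cdot)^2}$ and $\arctan$ fail to be smooth (the branch cut of the polar-coordinate map).

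Next I would compute the Jacobian of $g$. We have $Dg(\vu)=\big(Dh(\vx+\mE\vu)-Dh(\vx)\big)\mE$, where $Dh$ is the Jacobian of the range–bearing map, which in polar-type coordinates is invertible away from the branch ray with $\det Dh(\vz)=1/r(\vz)$, $r(\vz)$ the range. The key algebraic fact I expect to establish — and this is where the ``regular properties of the remainder'' derived in Section~\ref{sec_5} enter — is that $Dg(\vu)$ is \emph{nonsingular for every $\vu$ with $\|\vu\|<1$} (equivalently, $Dh(\vx+\mE\vu)\neq Dh(\vx)$ and, more precisely, $Dh(\vx+\mE\vu)-Dh(\vx)$ is invertible). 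Granting this, the inverse function theorem makes $g$ a local diffeomorphism at each interior point, hence $g$ maps the open ball $\{\|\vu\|<1\}$ to an open subset of $\mathcal{R}^2$.

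From openness the conclusion is immediate: if $p=g(\vu_0)$ with $\|\vu_0\|<1$, then $g$ carries a small ball around $\vu_0$ onto an open neighbourhood of $p$ contained in $\mS$, so $p\in\operatorname{int}\mS$ and $p\notin\partial\mS$. Since $\mS=g(\{\|\vu\|\le1\})$ is the continuous image of a compact set, it is closed, so $\partial\mS\subseteq\mS$; combining, every point of $\partial\mS$ is of the form $g(\vu)$ with $\|\vu\|=1$, which is the claim. The main obstacle is the Jacobian nonsingularity step: one must rule out that the affine part $\mJ_h\mE\vu$ exactly cancels the first-order variation of $h$ somewhere strictly inside the ball, i.e.\ that $Dh(\vx+\mE\vu)=Dh(\vx)$ for some interior $\vu$. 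I would handle this by using the explicit structure of $Dh$ — it depends on $\vx+\mE\vu$ only through the direction and reciprocal range relative to $(a,b)$ — so $Dh(\vx+\mE\vu)=Dh(\vx)$ would force $\vx+\mE\vu$ and $\vx$ to have the same range and bearing from $(a,b)$, hence $\mE\vu=0$, hence $\vu=0$; and more generally one checks that the difference matrix, when nonzero, has nonzero determinant by a direct $2\times2$ computation using the non-intersection hypothesis to keep all ranges positive and bounded. If a cleaner argument is available it would be to note that $g$ together with its behaviour near $\vu=0$ (where $g(0)=0$, $Dg(0)=0$, but the second-order term is nondegenerate by the curvature of $h$) already forces local injectivity on the punctured ball; either way, non-vanishing of $\det Dg$ on $\{0<\|\vu\|<1\}$ plus a separate openness check at $\vu=0$ closes the proof.
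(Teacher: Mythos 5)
Your overall strategy --- apply the inverse function theorem at interior points where $\det Dg\neq 0$ to conclude that such points map into the interior of $\mS$, then use compactness of $\mS$ --- is exactly the paper's strategy. The gap is that the pivotal claim you rest it on is false: it is not true that $Dg(\vu)=\bigl(Dh(\vx+\mE\vu)-Dh(\vx)\bigr)\mE$ is nonsingular for all $0<\parallel\vu\parallel<1$, nor that the difference matrix ``when nonzero, has nonzero determinant.'' Concretely, suppose $\vx+\mE\vu$ lies on the same ray from $(a,b)$ as $\vx$, i.e. $\vx+\mE\vu-(a,b)^T=t\,(\vx-(a,b)^T)$ in the first two coordinates with $t>0$, $t\neq1$. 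The first row of $Dh$ at a point $\vz$ is the unit vector $(\vz-(a,b)^T)^T/\Vert \vz-(a,b)^T\Vert$, so the first rows of $Dh(\vx+\mE\vu)$ and $Dh(\vx)$ coincide and the first row of the difference matrix vanishes identically; the second row (the bearing gradient, which scales like the reciprocal of the range) does not vanish for $t\neq1$. Hence the difference matrix is nonzero yet singular. This degenerate locus is the entire chord $\{c\vu(1)+d\vu(2)=0\}\cap\{\parallel\vu\parallel<1\}$ with $c=\mE_{11}(\vx(2)-b)-\mE_{21}(\vx(1)-a)$ and $d=\mE_{12}(\vx(2)-b)-\mE_{22}(\vx(1)-a)$ (this is the content of Lemma \ref{lem_6}), so the inverse function theorem gives you nothing on a one-dimensional set of interior points. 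Your fallback via local injectivity on the punctured ball also fails there, because $g$ vanishes identically on that chord and so is not locally injective near any of its points.

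The repair is what the paper does. On the degenerate chord the bearing of $\vx+\mE\vu$ equals that of $\vx$ and the range increment is matched exactly by the linear term $\mJ_h\mE\vu$, so $g\equiv 0$ there; since the chord meets the unit circle, $0\in\{g(\vu):\parallel\vu\parallel=1\}$, and the image of the whole degenerate set contributes only the single point $0$, which is already accounted for by the boundary sphere. Combining this with your openness argument on the complement of the chord (where $\det Dg>0$ by the explicit determinant formula) and the elementary inclusion of Lemma \ref{lem_5} yields $\partial\mS\subseteq\{g(\vu):\parallel\vu\parallel=1\}$. Note also that $Dg(0)=0$ is just a special case of the degeneracy above, so no separate openness analysis at $\vu=0$ is needed once $0$ is routed through the circle.
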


The proof of Theorem \ref{thm_5} relies on the following three lemmas.

\begin{lemma}[Remainder Lemma]\label{lem_6}
The determinant of the derivative of the remainder $g(\vu)$ is not less than $0$,
and the equality holds if and only if $c\vu(1)+d\vu(2)=0$, where
$c=\mE_{11}(\vx(2)-b)-\mE_{21}(\vx(1)-a)$, $d=\mE_{12}(\vx(2)-b)-\mE_{22}(\vx(1)-a)$, and $\mE_{ij}$ is the entry of the $ith$ row
and the $jth$ column of the matrix $\mE$. Meanwhile, if $c\vu(1)+d\vu(2)=0$, then $g(\vu)=0$.
\end{lemma}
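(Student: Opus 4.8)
The plan is to compute the Jacobian of $g(\vu)=h(\vx+\mE\vu)-h(\vx)-\mJ_h\mE\vu$ directly and factor its determinant. First I would observe that $g$ depends on $\vu$ only through the first two coordinates, so it suffices to treat $h$ as a map $\mathcal R^2\to\mathcal R^2$ and $\mE$ as the corresponding $2\times 2$ block. Writing $\vz=\vx+\mE\vu$ and $r(\vz)=\sqrt{(\vz(1)-a)^2+(\vz(2)-b)^2}$, $\theta(\vz)=\arctan\!\big((\vz(2)-b)/(\vz(1)-a)\big)$, the chain rule gives
\begin{eqnarray}
\nonumber \frac{\partial g(\vu)}{\partial \vu}=\big(\mJ_h(\vz)-\mJ_h(\vx)\big)\mE,
\end{eqnarray}
where $\mJ_h(\cdot)$ denotes the Jacobian of $h$ evaluated at the indicated point (note $\mJ_h=\mJ_h(\vx)$). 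Hence $\det\!\big(\partial g/\partial\vu\big)=\det\mE\cdot\det\!\big(\mJ_h(\vz)-\mJ_h(\vx)\big)$, and since $\mP\succ 0$ we have $\det\mE>0$; the sign is governed entirely by $\det\!\big(\mJ_h(\vz)-\mJ_h(\vx)\big)$.

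Next I would write $\mJ_h(\vz)$ explicitly. With $p=\vz(1)-a$, $q=\vz(2)-b$ and $\rho^2=p^2+q^2$ one has
\begin{eqnarray}
\nonumber \mJ_h(\vz)=\left[\begin{array}{cc} p/\rho & q/\rho \\[2mm] -q/\rho^2 & p/\rho^2\end{array}\right].
\end{eqnarray}
Using the polar substitution $p=\rho\cos\phi$, $q=\rho\sin\phi$ makes this a rotation-like matrix scaled row-wise, and the analogous quantities for $\vx$ I would call $\rho_0,\phi_0$. The difference $\mJ_h(\vz)-\mJ_h(\vx)$ then has a determinant that, after expansion and the identity $\cos\phi\cos\phi_0+\sin\phi\sin\phi_0=\cos(\phi-\phi_0)$, collapses to something of the form $\tfrac{1}{\rho\rho_0}\big(1-\cos(\phi-\phi_0)\big)\ge 0$ (up to a positive factor). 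This yields the inequality $\det(\partial g/\partial\vu)\ge 0$ and shows equality holds exactly when $\phi=\phi_0$, i.e. when $\vz$ lies on the ray from $(a,b)$ through $\vx$. Translating $\phi=\phi_0$ back into $\vu$: the vectors $(\vx(1)-a,\vx(2)-b)$ and $(\vz(1)-a,\vz(2)-b)=(\vx(1)-a+(\mE\vu)_1,\vx(2)-b+(\mE\vu)_2)$ must be parallel, i.e. $(\mE\vu)_1(\vx(2)-b)-(\mE\vu)_2(\vx(1)-a)=0$, and expanding $(\mE\vu)_1=\mE_{11}\vu(1)+\mE_{12}\vu(2)$, $(\mE\vu)_2=\mE_{21}\vu(1)+\mE_{22}\vu(2)$ gives precisely $c\vu(1)+d\vu(2)=0$ with $c,d$ as stated.

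Finally, for the last assertion, suppose $c\vu(1)+d\vu(2)=0$, so $\vz$ lies on the ray through $\vx$ emanating from $(a,b)$ (and on the correct side, by the hypothesis of Theorem \ref{thm_5} that the ellipsoid avoids the opposite radial). Then $h$ restricted to that ray is, in the $r$-component, an affine function of arclength and, in the $\theta$-component, constant; since $\mJ_h\mE\vu$ is exactly the first-order term of this affine behavior, the remainder vanishes: $g(\vu)=0$. I would make this precise by parametrizing the ray and checking both components. The main obstacle I anticipate is purely computational bookkeeping — carrying the $1/\rho$ versus $1/\rho^2$ row scalings through the $2\times 2$ determinant without sign errors, and confirming that the leftover factor is manifestly $\ge 0$ rather than merely real; the polar substitution is what keeps this manageable.
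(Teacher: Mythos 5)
Your proposal follows essentially the same route as the paper: both compute $\partial g/\partial\vu=\bigl(\mJ_h(\vx+\mE\vu)-\mJ_h(\vx)\bigr)\mE$, factor out $\det\mE>0$, and reduce the sign question to the determinant of the Jacobian difference, which the paper simplifies in Cartesian form and bounds via Cauchy--Schwarz while you obtain it in polar form as $(1/\rho+1/\rho_0)\bigl(1-\cos(\phi-\phi_0)\bigr)\ge 0$ --- the same factorization in different coordinates. Your treatment of the equality case and of the final claim $g(\vu)=0$ on the ray matches the paper's as well, and is in fact slightly more careful in noting that the radial-avoidance hypothesis is what excludes the anti-parallel configuration.
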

\begin{proof}
From the definition of the function $g(\vu)$, it is easy to see that $g(\vu)$ is a continuously differentiable function. By simple calculations, Jacobian matrix $\mJ_g$ of $g(\vu)$ is
\begin{eqnarray}\nonumber &&\mJ_g=\\
\nonumber&&\left[
                  \begin{array}{cc}
                    \frac{\bigtriangleup_1}{\sqrt{\bigtriangleup_1^2+\bigtriangleup_2^2}}- \frac{\bigtriangledown_1}{\sqrt{\bigtriangledown_1^2+\bigtriangledown_2^2}}
                    &  \frac{\bigtriangleup_2}{\sqrt{\bigtriangleup_1^2+\bigtriangleup_2^2}}- \frac{\bigtriangledown_2}{\sqrt{\bigtriangledown_1^2+\bigtriangledown_2^2}} \\
                     \frac{-\bigtriangleup_2}{\bigtriangleup_1^2+\bigtriangleup_2^2}- \frac{-\bigtriangledown_2}{\bigtriangledown_1^2+\bigtriangledown_2^2}
                     &  \frac{\bigtriangleup_1}{\bigtriangleup_1^2+\bigtriangleup_2^2}- \frac{\bigtriangledown_1}{\bigtriangledown_1^2+\bigtriangledown_2^2} \\
                  \end{array}
                \right]\mE\\[3mm]
\nonumber              && =\mJ_h\mE.
\end{eqnarray}
where
\begin{eqnarray}
\label{Eqpre_156}\bigtriangleup_1&=&\vx(1)+\mE_{11}\vu(1)+\mE_{12}\vu(2)-a,\\
\label{Eqpre_157}\bigtriangleup_2&=&\vx(2)+\mE_{21}\vu(1)+\mE_{22}\vu(2)-b,\\
\label{Eqpre_158}\bigtriangledown_1&=&\vx(1)-a,\\
\label{Eqpre_159}\bigtriangledown_2&=&\vx(2)-b,
\end{eqnarray}
\begin{eqnarray}\nonumber &&\mJ_h=\\
\nonumber&&\left[
                  \begin{array}{cc}
                    \frac{\bigtriangleup_1}{\sqrt{\bigtriangleup_1^2+\bigtriangleup_2^2}}- \frac{\bigtriangledown_1}{\sqrt{\bigtriangledown_1^2+\bigtriangledown_2^2}}
                    &  \frac{\bigtriangleup_2}{\sqrt{\bigtriangleup_1^2+\bigtriangleup_2^2}}- \frac{\bigtriangledown_2}{\sqrt{\bigtriangledown_1^2+\bigtriangledown_2^2}} \\
                     \frac{-\bigtriangleup_2}{\bigtriangleup_1^2+\bigtriangleup_2^2}- \frac{-\bigtriangledown_2}{\bigtriangledown_1^2+\bigtriangledown_2^2}
                     &  \frac{\bigtriangleup_1}{\bigtriangleup_1^2+\bigtriangleup_2^2}- \frac{\bigtriangledown_1}{\bigtriangledown_1^2+\bigtriangledown_2^2} \\
                  \end{array}
                \right].
\end{eqnarray}
Simplifying the determinant of $\mJ_h$,
\begin{eqnarray}
\nonumber &&det(\mJ_h)=\\
\nonumber &&~~~\frac{\left(\sqrt{(\bigtriangleup_1^2+\bigtriangleup_2^2)(\bigtriangledown_1^2+\bigtriangledown_2^2)}
-(\bigtriangleup_1\bigtriangledown_1+\bigtriangleup_2\bigtriangledown_2)\right)
}
{(\bigtriangleup_1^2+\bigtriangleup_2^2)(\bigtriangledown_1^2+\bigtriangledown_2^2)}\\
\nonumber &&~~~\cdot\left(\sqrt{\bigtriangleup_1^2+\bigtriangleup_2^2}+\sqrt{\bigtriangledown_1^2+\bigtriangledown_2^2}\right).
\end{eqnarray}
Thus, $det(\mJ_h)\geq0$ and the equality holds if and only if
$\bigtriangleup_2\bigtriangledown_1=\bigtriangleup_1\bigtriangledown_2$. Since $det(\mJ_g)=det(\mJ_h)det(\mE)$ and $det(\mE)>0$, then $det(\mJ_g)\geq0$ and the equality holds if and only if
$\bigtriangleup_2\bigtriangledown_1=\bigtriangleup_1\bigtriangledown_2$, at the same time, we have $g(\vu)=0$. Moreover, by (\ref{Eqpre_156})-(\ref{Eqpre_159}), it is easy to see that $\bigtriangleup_2\bigtriangledown_1=\bigtriangleup_1\bigtriangledown_2$ is equivalent to $c\vu(1)+d\vu(2)=0$, where
$c=\mE_{11}(\vx(2)-b)-\mE_{21}(\vx(1)-a)$, $d=\mE_{12}(\vx(2)-b)-\mE_{22}(\vx(1)-a)$, and $\mE_{ij}$ is the entry of the $ith$ row
and the $jth$ column of the matrix $\mE$.
\end{proof}

\begin{lemma}\label{lem_5}
If the sets $\mS^1\bigcup\mS^2=\mS^3\bigcup\mS^4$, $\mS^3\bigcap\mS^4=\emptyset$, $\mS^1\subset\mS^3$, then $\mS^4\subset\mS^2$.
\end{lemma}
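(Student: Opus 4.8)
The plan is to establish the inclusion $\mS^4\subseteq\mS^2$ by a direct element-chasing argument that uses each of the three hypotheses exactly once. First I would fix an arbitrary point $x\in\mS^4$; the goal is to show $x\in\mS^2$. Since $x\in\mS^4\subseteq\mS^3\cup\mS^4$ and, by hypothesis, $\mS^3\cup\mS^4=\mS^1\cup\mS^2$, we obtain $x\in\mS^1\cup\mS^2$, so at least one of the two cases $x\in\mS^1$ or $x\in\mS^2$ must occur.

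Next I would rule out the case $x\in\mS^1$. If $x\in\mS^1$, then the inclusion $\mS^1\subseteq\mS^3$ forces $x\in\mS^3$; combined with $x\in\mS^4$ this gives $x\in\mS^3\cap\mS^4$, contradicting the disjointness hypothesis $\mS^3\cap\mS^4=\emptyset$. Hence $x\in\mS^1$ is impossible, so $x\in\mS^2$. As $x\in\mS^4$ was arbitrary, this yields $\mS^4\subseteq\mS^2$, which is the assertion of the lemma (here, as elsewhere in the paper, $\subset$ is read as not-necessarily-strict inclusion).

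There is essentially no obstacle here: the statement is a purely set-theoretic fact, proved by a one-step case split followed by a single contradiction. The only thing to watch is the bookkeeping — the equality of the two unions is what places $x$ into $\mS^1\cup\mS^2$, the inclusion $\mS^1\subseteq\mS^3$ is what pushes the unwanted case into $\mS^3$, and the disjointness is what eliminates it. This lemma will then be invoked in the proof of Theorem \ref{thm_5}, with $\mS^1$ and $\mS^2$ taken to be the images under $g$ of the sphere $\{\|\vu\|=1\}$ and of the open ball $\{\|\vu\|<1\}$, and $\mS^3$, $\mS^4$ taken to be the boundary and the interior of $\mS$; so no topological or analytic input is needed at this stage.
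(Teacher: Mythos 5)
Your element-chasing argument is correct and logically identical to the paper's proof, which derives the same conclusion by set algebra (from $\mS^1\cup\mS^2\subset\mS^3\cup\mS^2$ it gets $\mS^3\cup\mS^4\subset\mS^3\cup\mS^2$, intersects with $\mS^4$, and uses disjointness); both use the three hypotheses in exactly the same way, so the difference is purely one of presentation. The only quibble is in your closing aside about the application in Theorem \ref{thm_5}: there the roles are that $\mS^3$ is the \emph{interior} and $\mS^4$ the \emph{boundary} of $\mS$ (and $\mS^1$ of the lemma is the image of the open ball off the degenerate line), but this does not affect the proof of the lemma itself.
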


\begin{proof} 
Since $\mS^1\subset\mS^3$, then $\mS^1\bigcup\mS^2\subset\mS^3\bigcup\mS^2$. Using $\mS^1\bigcup\mS^2=\mS^3\bigcup\mS^4$, we obtain $\mS^3\bigcup\mS^4\subset\mS^3\bigcup\mS^2$, then $(\mS^3\bigcup\mS^4)\bigcap\mS^4\subset(\mS^3\bigcup\mS^2)\bigcap\mS^4$. By $\mS^3\bigcap\mS^4=\emptyset$, we have $\mS^4\subset\mS^4\bigcap\mS^2$.  Moreover, $\mS^4\subset\mS^2$.
\end{proof}

\begin{lemma}[Inverse Function Theorem by \cite{Spivak65}]\label{lem_4}
Suppose that $\varphi:\mR^n\rightarrow\mR^n$ is continuously differentiable in an open set containing $\vu$, and $det(\varphi^{'}(\vu))\neq0$. Then there is an open set $\mV$ containing $\vu$ and open set $\mW$ containing $\varphi(\vu)$ such that
$\varphi: \mV\rightarrow\mW$ has a continuous inverse $\varphi^{-1}: \mW\rightarrow\mV$ which is  differentiable and for all
$\vy\in\mW$ satisfies
\begin{eqnarray}
\nonumber (\varphi^{-1})^{'}(\vy)=[\varphi^{'}(\varphi^{-1}(\vy))]^{-1}.
\end{eqnarray}

\end{lemma}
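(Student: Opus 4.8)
The plan is to prove this by the contraction mapping (Banach fixed-point) principle, in the standard four movements: a normalization of the derivative, a local injectivity and Lipschitz estimate, openness of the image (local surjectivity), and finally differentiability of the inverse together with the stated formula. Throughout I write $\varphi'$ for the Jacobian and regard it as a linear map; the determinant hypothesis enters only to guarantee invertibility of $\varphi'$ near $\vu$. First I would normalize: set $\mA=\varphi'(\vu)$, which is invertible since $\det\mA\neq0$, and observe that it suffices to prove the theorem for $\mA^{-1}\varphi$ in place of $\varphi$. Indeed the two maps differ by the fixed invertible linear factor $\mA$, so the local inverse of one is obtained from the other by precomposing with $\mA^{-1}$, and a short chain-rule computation shows the asserted derivative identity is preserved under this change. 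Hence I may assume $\varphi'(\vu)=\mI$.

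Next I would establish local injectivity with a quantitative lower bound. Define $\psi(\vx)=\vx-\varphi(\vx)$; then $\psi$ is continuously differentiable near $\vu$ with $\psi'(\vu)=\mI-\mI=0$, so by continuity of $\varphi'$ there is a closed ball $\mathcal{B}=\{\vx:\|\vx-\vu\|\le r\}$ on which $\|\psi'(\vx)\|\le\tfrac12$. The mean value inequality gives $\|\psi(\vx_1)-\psi(\vx_2)\|\le\tfrac12\|\vx_1-\vx_2\|$, and hence the reverse bound $\|\varphi(\vx_1)-\varphi(\vx_2)\|\ge\tfrac12\|\vx_1-\vx_2\|$ for all $\vx_1,\vx_2\in\mathcal{B}$; in particular $\varphi$ is injective on $\mathcal{B}$. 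For openness of the image, fix $\vy$ with $\|\vy-\varphi(\vu)\|<r/2$ and consider $T_{\vy}(\vx)=\psi(\vx)+\vy$. A short estimate shows $T_{\vy}$ maps $\mathcal{B}$ into itself and is a $\tfrac12$-contraction, so by the Banach fixed-point theorem there is a unique $\vx\in\mathcal{B}$ with $T_{\vy}(\vx)=\vx$, i.e. $\varphi(\vx)=\vy$. Taking $\mW=\{\vy:\|\vy-\varphi(\vu)\|<r/2\}$ and $\mV=\varphi^{-1}(\mW)\cap\{\vx:\|\vx-\vu\|<r\}$ yields open sets on which $\varphi:\mV\to\mW$ is a bijection, and the reverse bound above shows $\varphi^{-1}$ is Lipschitz with constant $2$, hence continuous.

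Finally I would prove differentiability and derive the formula. Since $\det\varphi'$ is continuous, after shrinking $\mV$ I may assume $\varphi'(\vx)$ is invertible for every $\vx\in\mV$. Fix $\vy=\varphi(\vx)\in\mW$ and put $\mT=[\varphi'(\vx)]^{-1}$. Writing $\vh=\varphi^{-1}(\vy+\vk)-\varphi^{-1}(\vy)$ for small $\vk$, the Lipschitz bound gives $\|\vh\|\le2\|\vk\|$, while differentiability of $\varphi$ at $\vx$ yields $\vk=\varphi(\vx+\vh)-\varphi(\vx)=\varphi'(\vx)\vh+o(\|\vh\|)$; applying $\mT$ and rearranging turns $\varphi^{-1}(\vy+\vk)-\varphi^{-1}(\vy)-\mT\vk$ into $o(\|\vh\|)=o(\|\vk\|)$. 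This shows $\varphi^{-1}$ is differentiable at $\vy$ with $(\varphi^{-1})'(\vy)=\mT=[\varphi'(\varphi^{-1}(\vy))]^{-1}$, which is exactly the claimed identity, and continuity of the composition $\vy\mapsto[\varphi'(\varphi^{-1}(\vy))]^{-1}$ shows the inverse is in fact continuously differentiable.

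The step I expect to be the main obstacle is the openness/local surjectivity: injectivity and the Lipschitz bounds follow routinely from the mean value inequality, but solving $\varphi(\vx)=\vy$ for every nearby $\vy$ is the genuinely analytic core, and the delicate point is verifying that $T_{\vy}$ stabilizes $\mathcal{B}$ so that the fixed point actually lies in the domain. (An alternative, following the cited reference \cite{Spivak65}, replaces the fixed-point argument by minimizing $\vx\mapsto\|\varphi(\vx)-\vy\|^2$ over the compact ball and arguing the minimizer is interior, thereby using only compactness and the extreme value theorem.) Once invertibility of $\varphi'$ on $\mV$ is secured by continuity of the determinant, the differentiability argument and the derivative formula are then essentially bookkeeping.
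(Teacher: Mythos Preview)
Your proposal is a correct, standard proof of the Inverse Function Theorem via the contraction mapping principle, and you even correctly note that the cited reference \cite{Spivak65} opts instead for the compactness/minimization argument. However, there is nothing to compare against: the paper does not supply its own proof of this lemma at all. It is stated as a cited classical result from \cite{Spivak65} and used as a black box in the proof of Theorem~\ref{thm_5}. So your write-up goes well beyond what the paper does, and either the Banach fixed-point route you outline or the Spivak minimization route would serve equally well if a self-contained proof were desired.
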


\begin{proof}[Proof of Theorem \ref{thm_5}] Since $g(\vu)$ is a continuous function in $\mS_1=\{||\vu||\leq1\}$ and $\mS_1$ is compact,  $\mS=\{g(\vu): \parallel\vu \parallel\leq 1\}$is compact \cite{Rosenlicht68}. If we denote the interior and the boundary of the set $\mS$ by $\mS^i$ and $\mS^b$ respectively,
then $\mS=\mS^i\bigcup\mS^b$ and $\mS^i\bigcap\mS^b=\emptyset$. We need to prove $\mS^b\subset\{g(\vu):\parallel \vu \parallel= 1\}$.

By definition of the set $\mS_1$, we can divide it into two parts, i.e., $\mS_1=\mS_1^1\bigcup\mS_1^2$, where
$\mS_1^1=\{\vu:\parallel \vu \parallel= 1~ or~ c\vu(1)+d\vu(2)=0\}$, $\mS_1^2=\{\vu:\parallel \vu \parallel< 1, c\vu(1)+d\vu(2)\neq0\}$ and $c, d$ are defined in Lemma \ref{lem_6}. According to the expression of the set $\mS$, then, we can divide the set $\mS$ into the corresponding parts, i.e., $\mS=\mS^1\bigcup\mS^2$, where $\mS^1=\{g(\vu): \vu\in\mS_1^1\}$ and $\mS^2=\{g(\vu): \vu\in\mS_1^2\}$. Thus $\mS^1\bigcup\mS^2=\mS^i\bigcup\mS^b$.

Next, we prove $\mS^2\subset\mS^i$. For $\forall \vz\in\mS^2$,~$\exists\vu\in\mS_1^2$, s.t., $\vz=g(\vu)$. From the definition of the set $\mS_1^2$, we can see that $det(\mJ_g)>0$ with Lemma \ref{lem_6}. Using Lemma \ref{lem_4}, we can find an open set $\mW\in\mS$ containing $g(\vu)$, in other words, $\vz$ is the interior point of $\mS$, i.e., $\vz\in\mS^i$, thus,
$\mS^2\subset\mS^i$. According to Lemma \ref{lem_5}, we can obtain $\mS^b\subset\mS^1$.

Moreover, we prove that $\mS^1=\{g(\vu):\parallel \vu \parallel= 1\}$. Note that $\mS^1=\{g(\vu):\parallel \vu \parallel= 1\}\bigcup\{g(\vu):c\vu(1)+d\vu(2)=0\}$. According to Lemma \ref{lem_6}, it is obvious that $\{g(\vu):c\vu(1)+d\vu(2)=0\}=\{0\}$. Let $\vu_0=[\frac{-d}{\sqrt{d^2+c^2}}~ \frac{c}{\sqrt{d^2+c^2}}]$, then $\vu_0\in\{\vu:c\vu(1)+d\vu(2)=0\}\bigcap\{\vu:\parallel \vu \parallel= 1\}$, we can also get   $g(\vu_0)\in\{g(\vu):c\vu(1)+d\vu(2)=0\}=\{0\}$ and $g(\vu_0)\in\{g(\vu):\parallel \vu \parallel= 1\}$, then $\{0\}\subset\{g(\vu):\parallel \vu \parallel= 1\}$. Thus, $\mS^1=\{g(\vu):\parallel \vu \parallel= 1\}$.

Therefore, we have $\mS^b\subset\{g(\vu):\parallel \vu \parallel= 1\}$, in other words, the boundary of $\mS=\{g(\vu): \parallel u\parallel\leq 1\}$ belongs to the set $\{g(\vu): \parallel \vu \parallel= 1\}$.
\end{proof}

\begin{example}
To illustrate Theorem \ref{thm_5}, we give an example as follow: if $a=50$, $b=100$, $x=[80 ~130]^T$, $\mP=diag(500,1000)$, it is easy to check that $g(\vu)$ is continuously differentiable in set $\mS_1=\{\vu:\parallel \vu \parallel\leq 1\}$. We divide $\mS_1$ into three parts, i.e., $\mS_1=\mA^1\cup \mB^1\cup\mC^1$, where $\mA^1=\{\vu: c\vu(1)+d\vu(2)<0, \parallel \vu \parallel\leq 1\}$, $\mB^1=\{\vu: c\vu(1)+d\vu(2)>0, \parallel \vu \parallel\leq 1\}$, and $\mC^1=\{\vu: c\vu(1)+d\vu(2)=0, \parallel \vu \parallel\leq 1\}$. Meanwhile, we can also divide $\mS$ into the corresponding parts, such that $\mA=\{g(\vu): \vu\in \mA^1\}$, $\mB=\{g(\vu): \vu\in \mB^1\}$, $\mC=\{g(\vu): \vu\in \mC^1\}$, then $\mS=\mA\cup\mB\cup\mC$.

Figure \ref{fig_01} shows that the separation area of the circle and their corresponding area of $g(\vu)$. Three observations can be seen:
\begin{itemize}
  \item The remainder set is the union of two sets.
  \item The (red) line $\mC^1$ is mapped to the point 0.
  \item The boundary of $\mS$ belongs to the set $\{g(\vu): \parallel \vu \parallel= 1\}$.
\end{itemize}
Thus, when take samples by Monte Carlo methods, the samples on boundary are sufficient to derive the outer bounding ellipsoids of the remainder set. Therefore, based on Theorem \ref{thm_5}, the computation complexity in the bounding steps of Algorithm \ref{alg_1} can be reduced much more.
\end{example}
\begin{figure}[h]
\vbox to 8cm{\vfill \hbox to \hsize{\hfill
\scalebox{0.6}[0.7]{\includegraphics{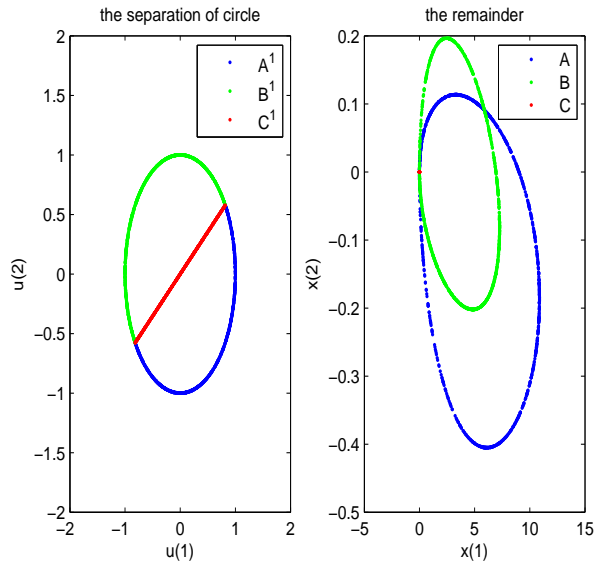}} \hfill}\vfill}
\caption{(left) the separation of circle. (right) the corresponding area of of $g(\vu)$}\label{fig_01}
\end{figure}
\begin{remark}
Note that the assumption that $\mE$ is a Cholesky factorization of a positive-definite $\mP$ such that $\{\vx+\mE\vu:\parallel u\parallel\leq 1\}$  is not intersect with the radial $\vx(1)<=a, \vx(2)=b$ is a weak condition. If the true target is near it, we can transform the data to a new coordinate system where the target far way the the radial, then the assumption can be satisfied.
\end{remark}

\section{Numerical examples in target tracking}\label{sec_6}
In this section, we compare the performance between Monte Carlo set membership filter and particle filter when the underlying probability density functions of noises are known or unknown. Meanwhile, we also compare it with the extended set-membership filter (ESMF) in \cite{Scholte-Campbell03}.

Considering a two-dimensional Cartesian coordinate system, we track a moving target using measured range and angle from one sensor. The system equation is as follows \cite{BarShalom-Li-Kirubarajan01}:
\begin{eqnarray}%
\label{Eqpre_54}\vx_{k+1}&=&f_k(\vx_k)+\vw_k,\\[3mm]
\label{Eqpre_55}\vy_k&=&h_k(\vx_k)+\vv_k,
\end{eqnarray}
where
\begin{eqnarray}%
\nonumber f_k(\vx_k)&=& \left[
                         \begin{array}{cccc}
                           1 & 0& T& 0\\
                           0 & 1 & 0 & T \\
                           0 & 0 & 1 & 1 \\
                           0 & 0 & 0 & 1 \\
                         \end{array}
                       \right]\vx_k
\end{eqnarray}

\begin{eqnarray}
\nonumber h_k(\vx_k)=\left[
                      \begin{array}{c}
                        \sqrt{(\vx_k(1))^2+(\vx_k(2))^2} \\[3mm]
                        arctan\left(\frac{\vx_k(2)}{\vx_k(1)}\right) \\
                      \end{array}
                    \right].
\end{eqnarray}
The $\vx$ is a four-dimensional state variable that includes
position and velocity $(x, y, \dot{x}, \dot{y})$, $T=0.2$s is the time sampling
interval.
The process noise and measurement noise assumed to be confined to specified ellipsoidal sets
\begin{eqnarray}
\nonumber\mW_k&=&\{\vw_k: \vw_k^T\mQ_k^{-1}\vw_k\leq1\}\\
\nonumber\mV_k&=&\{\vv_k: \vv_k^T\mR_k^{-1}\vv_k\leq1\}.
\end{eqnarray}
where
\begin{eqnarray}
\nonumber \mQ_k&=&\sigma^2\left[
                          \begin{array}{cccc}
                            \frac{T^3}{3} & 0 & \frac{T^2}{2} & 0 \\
                            0 & \frac{T^3}{3} & 0 & \frac{T^2}{2} \\
                            \frac{T^2}{2} & 0 & T & 0 \\
                            0 & \frac{T^2}{2} & 0 & T \\
                          \end{array}
                        \right]\\
 \nonumber \mR_k&=&\left[
                   \begin{array}{cc}
                     0.3^2 & 0\\
                     0 & {0.1}^2\\
                   \end{array}
                 \right].
\end{eqnarray}
The target acceleration is $\sigma^2=50$.
In the example, the target starts at the point $(50,30)$ with a velocity of $(5,5)$.
The center and the shape matrix of the initial bounding ellipsoid are $\hat{\vx}_0=\left[
 \begin{array}{cccc}
    49.5& 29.5 & 5 &5\\
 \end{array}
\right]^T$,
\begin{eqnarray}
\nonumber \mP_0=\left[
                  \begin{array}{cccc}
                    5 & 0 & 0 & 0 \\
                    0 & 5 & 0 & 0 \\
                    0 & 0 & 2 & 0 \\
                    0 & 0 & 0 & 2 \\
                  \end{array}
                \right],
\end{eqnarray}
respectively.
Assume that the noises are confined to specified ellipsoidal sets, the state noise is truncated Gaussian with  mean $[-0.2~-0.2~-1~-1]$ and covariance $\mQ_k/3^2$ and measurement noise is truncated Gaussian, with mean $[-0.4~0]^T$, covariance $\mR_k/3^2$ on the ellipsoidal sets, respectively.

From the description of the above, we can see that the condition of Algorithm \ref{alg_1} is satisfied, then, using MCSMF to calculate the error bound, which is defined as follows:
\begin{eqnarray}\label{Eqpre_200}
error(k)=\frac{1}{m}\sum_{i=1}^m|\vx_k^i-\hat{\vx}_k^i|,
\end{eqnarray}
where $\vx_k^i$ and $\hat{\vx}_k^i$ are the $ith$ true state and state estimate at time $k$, respectively, and $m$ is the number of the Monte Carlo runs.
When the underlying probability density functions of noises are known, we use the particle filter in \cite{Arulampalam-Maskell-Gordon-Clapp02}, which is denoted by \textbf{PF-T}.
When the underlying probability density functions of noises are unknown, we denote \textbf{PF-G} for the particle filter where the state noise and measurement noise are assumed the truncated Gaussian noise with zero mean.
 At the same time, we may assume that the noises are uniform density functions, then we still use particle filter, which is denoted by \textbf{PF-U}.
The extended set-membership filter in \cite{Scholte-Campbell03} is denoted by \textbf{ESMF}. These four filters have the same initial bounding ellipsoid in this example.


The following simulation results are under Matlab R2012a with YALMIP.

Figures \ref{fig_03}-\ref{fig_05} present a comparison of the error bounds along position and velocity direction of MCSMF with those of PF-T, PF-G, PF-U and ESMF, respectively.  Figures \ref{fig_03}-\ref{fig_05} show that when the probability density functions of noises are known, the performance of the particle filter is better than that of MCSMF and ESMF. The reason may be that more information of the probability density of noises is used. However, when it is unknown, the performance of the particle filter is worse than that of  MCSMF. In addition, the figures also show that performance of ESMF is unstable. The reason may be that there are some uncertain parameters to be used in ESMF and the remainder is bounded by interval mathematics method, which is conservative and leads a bigger bounding ellipsoid than MCSMF.

Figures \ref{fig_06}-\ref{fig_07} present the target tracking trajectories along $\vx$ direction by MCSMF and PF-T, respectively. The bounds of MCSMF and the $3\sigma$ confidence bounds of PF-T are also plotted.  Figures \ref{fig_06}-\ref{fig_07} show that  the $3\sigma$ confidence bounds of particle filter is indeed tighter than that of MCSMF, but it cannot contain the true state at some time step. It is an too optimistic bound. However, the bounds of MCSMF do guarantee the containment of the true state at each time step. This is useful in some applications. For example, in a civilian
air traffic control system, the confidence bounds of trajectories  can be used to check the standard separation between pairs of
targets for maintenance of safety conditions (collision avoidance) and regularity of traffic flow in  \cite{Mazor-Averbuch-BarShalom-Dayan98}.

The CPU times of MCSMF, PF-T, PF-U and PF-G are plotted as a function of number of samples and particles in Figure \ref{fig_04}, respectively. It shows that CPU times of the three filters are increasing as the number of samples and particles is increasing. The magnitude of the CPU time of the three filters are similar.


%
%

\begin{figure}[h]
\vbox to 6cm{\vfill \hbox to \hsize{\hfill
\scalebox{0.6}[0.59]{\includegraphics{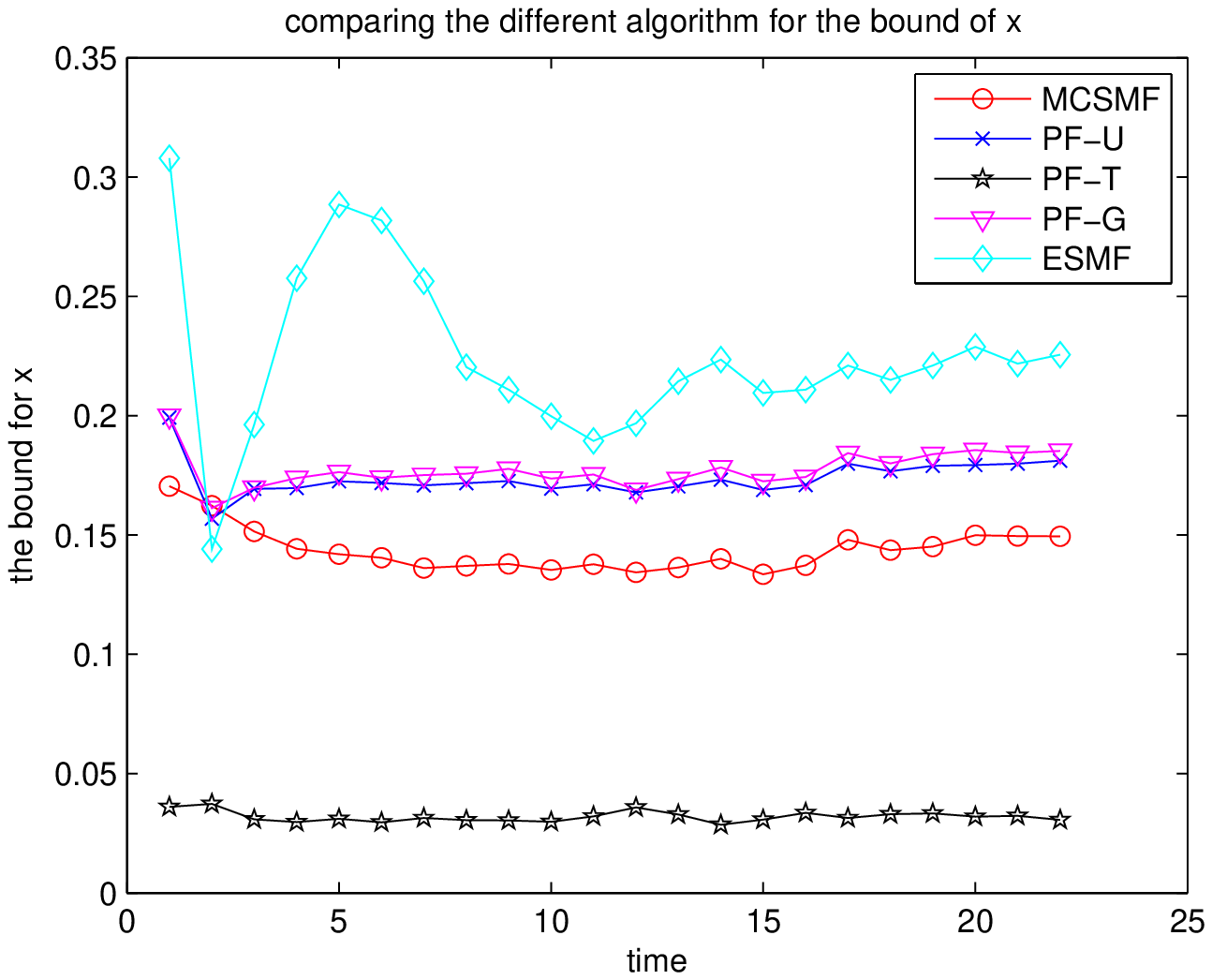}} \hfill}\vfill}
\caption{Comparison of the error bounds along position $\vx$ direction based on 200
Monte Carlo runs.}\label{fig_03}
\end{figure}

\begin{figure}[h]
\vbox to 6cm{\vfill \hbox to \hsize{\hfill
\scalebox{0.6}[0.59]{\includegraphics{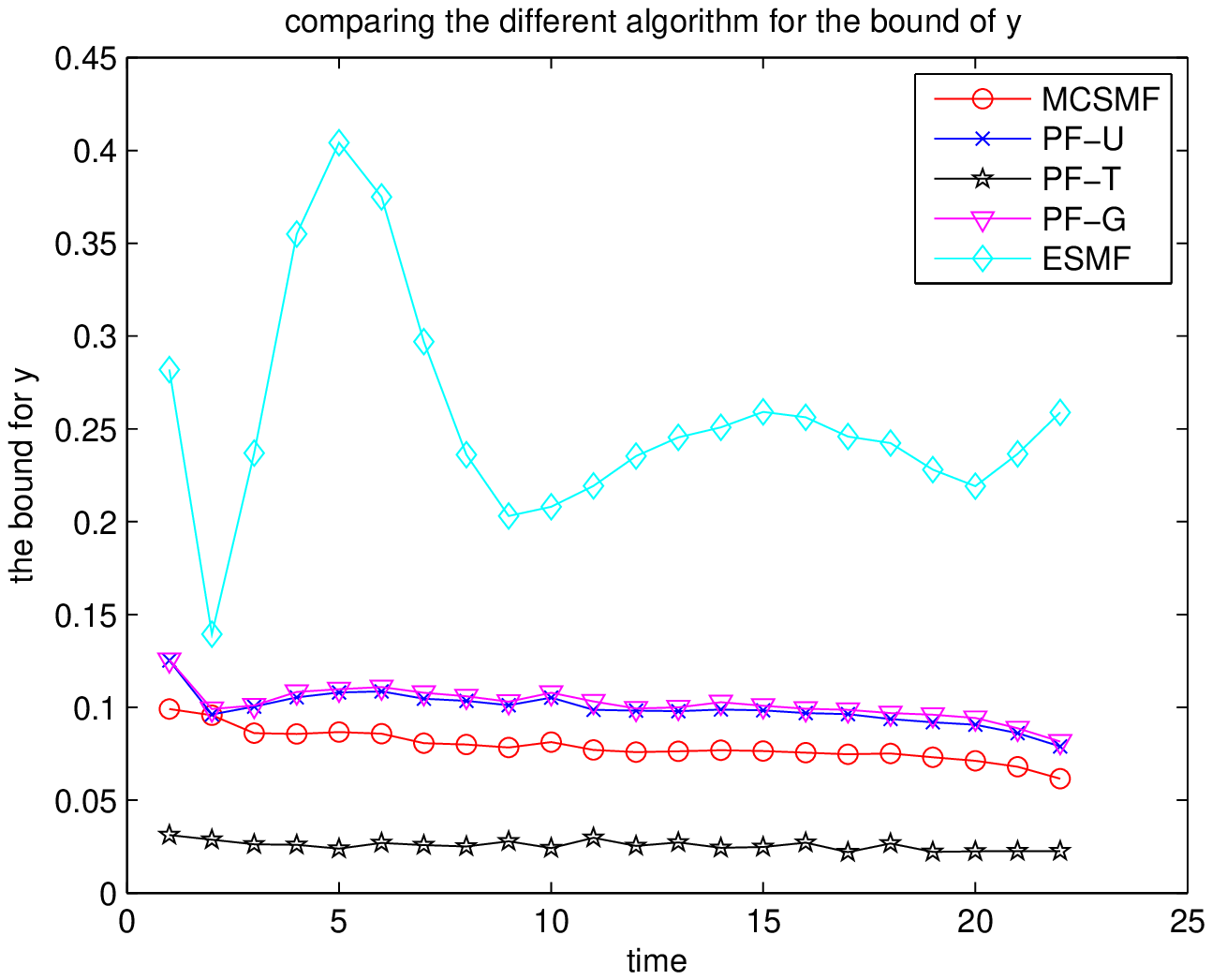}} \hfill}\vfill}
\caption{Comparison of the error bounds along position $\vy$ direction based on 200
Monte Carlo runs.}\label{fig_05}
\end{figure}


\begin{figure}[h]
\vbox to 6cm{\vfill \hbox to \hsize{\hfill
\scalebox{0.6}[0.59]{\includegraphics{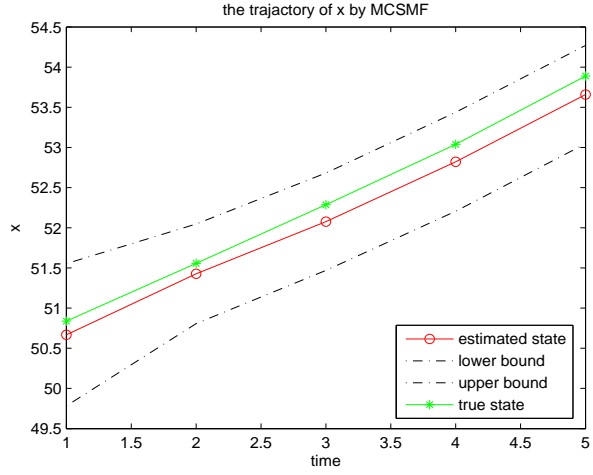}} \hfill}\vfill}
\caption{The target's  trajectory along $\vx$ direction by MCSMF }\label{fig_06}
\end{figure}
\begin{figure}[h]
\vbox to 6cm{\vfill \hbox to \hsize{\hfill
\scalebox{0.6}[0.59]{\includegraphics{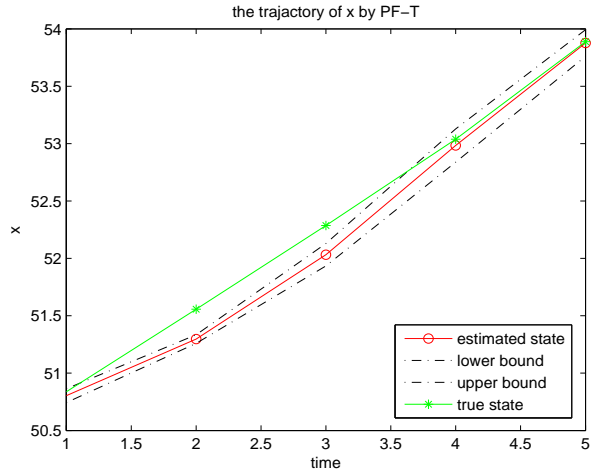}} \hfill}\vfill}
\caption{The target's  trajectory along $\vx$ direction by PF-T }\label{fig_07}
\end{figure}

\begin{figure}[h]
\vbox to 6cm{\vfill \hbox to \hsize{\hfill
\scalebox{0.6}[0.59]{\includegraphics{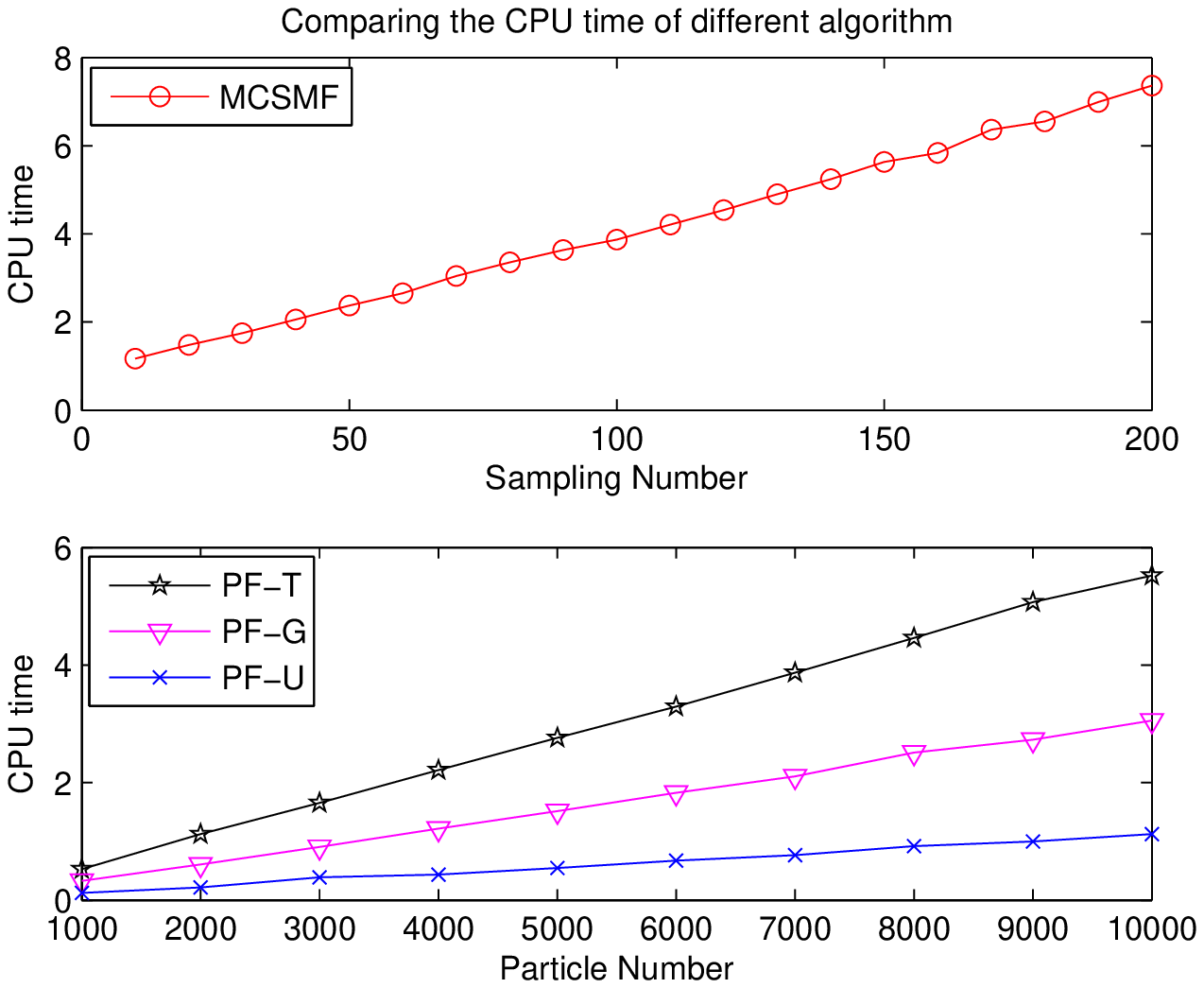}} \hfill}\vfill}
\caption{(up) The CPU times for MCSMF with different sampling numbers from the boundary. (bottom) The CPU times for PF-T, PF-U and PF-G with different particle numbers.}\label{fig_04}
\end{figure}


%

\section{Conclusion}\label{sec_7}
We have proposed a new class of filtering methods in bounded noise setting via set-membership theory
and Monte Carlo (boundary) sampling technique to determine a state estimation ellipsoid. The set-membership prediction and measurement update are derived by recent convex optimization methods based on S-procedure and Schur complement. To guarantee the on-line usage, the nonlinear dynamics are linearized about the current estimate and the remainder terms are then bounded by an ellipsoid, which can be written as a semi-infinite optimization problem. For a typical nonlinear dynamic system in target tracking, based on the remainder properties and the Inverse Function Theorem, the semi-infinite optimization problem can be efficiently solved by Monte Carlo boundary sampling technique. Numerical example shows that when the probability density functions of noises are unknown, the performance of MCSMF is better than that of the particle filter, and which is more robust than particle filter. Future work will involve, in the setting of MCSMF, the multi-sensor fusion, multiple target tracking and various applications
such as sensor management and placement for structures and different
types of wireless networks.

%

\section{APPENDIX}\label{sec_8}
\begin{lemma}\label{lem_1}\cite{Boyd-ElGhaoui-Feron-Balakrishnan94}
Let $\mF_0(\eta), \mF_1(\eta),\ldots, \mF_p(\eta)$, be quadratic functions in variable $\eta\in\mathcal {R}^{n}$
\begin{eqnarray}
\mF_i(\eta)=\eta^T\mT_i\eta, ~~i=0,\ldots, p
\end{eqnarray}
with $\mT_i=\mT_i^T$. Then the implication
\begin{eqnarray}
\mF_1(\eta)\leq0,\ldots,\mF_p(\eta)\leq0\Rightarrow\mF_0(\eta)\leq0
\end{eqnarray}
holds if there exist $\tau_1,\ldots,\tau_p\geq0$ such that
\begin{eqnarray}
\mT_0-\sum_{i=1}^p\tau_i\mT_i\preceq0.
\end{eqnarray}
\end{lemma}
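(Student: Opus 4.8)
The plan is to establish the stated direction only — that the linear matrix inequality is \emph{sufficient} for the quadratic implication — since this is the elementary half of the S-procedure and is all that the rest of the paper uses. No converse and hence no constraint-qualification (Slater) argument is involved.

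First I would fix multipliers $\tau_1,\dots,\tau_p\geq0$ realizing $\mT_0-\sum_{i=1}^p\tau_i\mT_i\preceq0$, and take an arbitrary $\eta\in\mathcal{R}^n$ with $\mF_1(\eta)\leq0,\dots,\mF_p(\eta)\leq0$. The single identity that does all the work is the trivial decomposition $\mT_0=\big(\mT_0-\sum_{i=1}^p\tau_i\mT_i\big)+\sum_{i=1}^p\tau_i\mT_i$; sandwiching it between $\eta^{T}$ on the left and $\eta$ on the right gives
\begin{eqnarray}
\nonumber \mF_0(\eta)=\eta^{T}\mT_0\eta=\eta^{T}\Big(\mT_0-\sum_{i=1}^p\tau_i\mT_i\Big)\eta+\sum_{i=1}^p\tau_i\,\mF_i(\eta).
\end{eqnarray}
The next step is to bound the two pieces. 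The first summand is $\leq0$ because $\zeta^{T}M\zeta\leq0$ for every $\zeta$ whenever $M\preceq0$, applied with $M=\mT_0-\sum_{i=1}^p\tau_i\mT_i$ and $\zeta=\eta$. The second summand is a sum of products of a nonnegative scalar $\tau_i$ with a nonpositive scalar $\mF_i(\eta)$, hence $\leq0$. Adding the two yields $\mF_0(\eta)\leq0$, which is exactly the asserted implication.

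I do not expect a genuine obstacle here: the content is one line of linear algebra, uniform in $p$, which is precisely why the excerpt imports it verbatim from \cite{Boyd-ElGhaoui-Feron-Balakrishnan94}. The only point worth flagging is scope. The hard companion result — that for $p=1$ this LMI is also \emph{necessary} under a strict-feasibility hypothesis — is not needed anywhere in this paper, since Theorems~\ref{thm_3}--\ref{thm_4} invoke only the sufficiency to pass from the set-membership conditions (``$\vx_k\in\mathcal{E}_k$, noises bounded, remainders bounded'' $\Rightarrow$ ``$\vx_{k+1}\in\mathcal{E}_{k+1|k}$'') to the corresponding semidefinite constraints. I would also remark in passing that the $\mF_i$ here are homogeneous quadratic forms (no affine or constant terms), so no homogenization is required; the affine-quadratic variant that is used implicitly when building the block matrices in \eqref{Eqpre_104} and \eqref{Eqpre_132} follows by applying this very lemma to the lifted forms in the augmented variable $\eta$ together with the Schur-complement reformulation, but that is a bookkeeping matter rather than a new difficulty.
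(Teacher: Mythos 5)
Your proof is correct and complete: the decomposition $\eta^{T}\mT_0\eta=\eta^{T}\bigl(\mT_0-\sum_{i=1}^p\tau_i\mT_i\bigr)\eta+\sum_{i=1}^p\tau_i\,\mF_i(\eta)$ together with $\tau_i\geq0$, $\mF_i(\eta)\leq0$ and negative semidefiniteness of the bracketed matrix is exactly the canonical sufficiency argument for the S-procedure. The paper itself states this lemma without proof, importing it from \cite{Boyd-ElGhaoui-Feron-Balakrishnan94}, and your argument coincides with the standard one given there, so there is nothing further to reconcile.
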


\begin{lemma}\label{lem_2}
Schur Complements \cite{Boyd-ElGhaoui-Feron-Balakrishnan94}: Given constant matrices $\mA$, $\mB$, $\mC$, where $\mC=\mC^T$ and $\mA=\mA^T<0$, then
\begin{eqnarray}
\mC-\mB^T\mA^{-1}\mB\preceq0
\end{eqnarray}
if and only if
\begin{eqnarray}
\left[
  \begin{array}{cc}
    \mA & \mB \\
    \mB^T & \mC \\
  \end{array}
\right]\preceq0
\end{eqnarray}
or equivalently
\begin{eqnarray}
\left[
  \begin{array}{cc}
    \mC & \mB^T \\
    \mB & \mA \\
  \end{array}
\right]\preceq0
\end{eqnarray}
\end{lemma}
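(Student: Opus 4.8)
The plan is to prove this classical identity by block Gaussian elimination realized as a congruence transformation: conjugate the $2\times 2$ block matrix by a unit upper-triangular matrix so that it becomes block diagonal, and then invoke the fact that a congruence by a nonsingular matrix preserves the inertia of a symmetric matrix (Sylvester's law of inertia). Since the statement is standard --- the paper already attributes it to \cite{Boyd-ElGhaoui-Feron-Balakrishnan94} --- the ``proof'' is really a verification rather than a genuine argument.

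First I would note that the hypothesis $\mA = \mA^T \prec 0$ makes $\mA$ invertible, so that $\mA^{-1}$ and the Schur complement $\mC - \mB^T\mA^{-1}\mB$ are well defined. Then I introduce the nonsingular matrix $\mT$ and verify, by a direct block multiplication, the block-diagonalization identity
\[
\mT = \left[\begin{array}{cc} \mI & -\mA^{-1}\mB \\ \mZero & \mI \end{array}\right], \qquad \mT^T\left[\begin{array}{cc} \mA & \mB \\ \mB^T & \mC \end{array}\right]\mT = \left[\begin{array}{cc} \mA & \mZero \\ \mZero & \mC - \mB^T\mA^{-1}\mB \end{array}\right].
\]
Since $\mT$ is invertible, the left-hand block matrix --- call it $\mM$ --- satisfies $\mM \preceq 0$ if and only if $\mT^T \mM \mT \preceq 0$; and a symmetric block-diagonal matrix is negative semidefinite if and only if each diagonal block is. The $(1,1)$ block equals $\mA\prec 0$, which is in particular negative semidefinite, so $\mM \preceq 0$ holds if and only if $\mC - \mB^T\mA^{-1}\mB \preceq 0$. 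That is the first asserted equivalence.

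For the ``equivalently'' form, I would observe that $\bigl[\begin{smallmatrix}\mC & \mB^T \\ \mB & \mA\end{smallmatrix}\bigr]$ is obtained from $\mM$ by simultaneously interchanging the two block rows and the two block columns, i.e.\ by a congruence with a permutation matrix; such a congruence again preserves negative semidefiniteness, so this matrix is $\preceq 0$ exactly when $\mM$ is, hence exactly when $\mC - \mB^T\mA^{-1}\mB \preceq 0$.

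The main obstacle here is essentially nonexistent: the only points requiring care are confirming invertibility of $\mA$ before writing $\mA^{-1}$, carrying out the block multiplication without a sign slip, and invoking congruence-invariance of inertia rather than an ad hoc spectral argument. If one prefers to avoid Sylvester's law, an equivalent elementary route is to partition $\vz$ conformally as $\vz = (\vx,\vy)$, expand $\vz^T \mM \vz = \vx^T\mA\vx + 2\vy^T\mB^T\vx + \vy^T\mC\vy$, complete the square in $\vx$ using $\mA \prec 0$ (so the form is maximized over $\vx$ at $\vx = -\mA^{-1}\mB\vy$), and note that $\max_{\vx}\vz^T\mM\vz = \vy^T(\mC - \mB^T\mA^{-1}\mB)\vy$; hence $\mM \preceq 0$ if and only if $\mC - \mB^T\mA^{-1}\mB \preceq 0$.
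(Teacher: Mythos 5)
Your proof is correct. The paper does not actually prove this lemma --- it is quoted verbatim from \cite{Boyd-ElGhaoui-Feron-Balakrishnan94} as a known tool --- so there is no internal argument to compare against; your congruence/block-elimination derivation (and the completion-of-squares variant you sketch) is the standard proof of exactly this statement, the block computation $\mT^T\mM\mT=\diag(\mA,\;\mC-\mB^T\mA^{-1}\mB)$ checks out, and the permutation-congruence step correctly handles the second block form.
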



\begin{proof}[Proof of Theorem \ref{thm_3}]: Note that $\vx_k\in\mathcal {E}_{k}$ is equivalent to $\vx_k=\hat{\vx}_{k}+\mE_{k}\vu_{k}$, $\parallel \vu_{k}\parallel\leq 1$,
where $\mE_{k}$ is a Cholesky factorization of $\mP_{k}$;  By the equations (\ref{Eqpre_1}) and (\ref{Eqpre_3}),
\begin{eqnarray}
\nonumber \vx_{k+1}-\hat{\vx}_{k+1|k}&=& f_k(\vx_k)+\vw_k-\hat{\vx}_{k+1|k}\\[3mm]
\nonumber  &=&f_k(\hat{\vx}_{k}+\mE_{k}\vu_{k})+\vw_k-\hat{\vx}_{k+1|k}\\[3mm]
\label{Eqpre_108} &=&f_k(\hat{\vx}_{k})+\mJ_{f_k}\mE_{k}\vu_{k}+\ve_{f_k}+\mB_{f_k}\Delta_{f_k}+\vw_k-\hat{\vx}_{k+1|k}
\end{eqnarray}
and by the equations (\ref{Eqpre_2}) and (\ref{Eqpre_4})
\begin{eqnarray}
\nonumber \vy_{k}&=& h_{k}(\vx_{k})+\vv_{k}\\[3mm]
\label{Eqpre_109}    &=&h_k(\hat{\vx}_{k})+\mJ_{h_k}\mE_{k}\vu_{k}+\ve_{h_k}+\mB_{h_k}\Delta_{h_k}+\vv_k
\end{eqnarray}

If we denote by
\begin{eqnarray}
\label{Eqpre_110} \xi=[1, ~\vu_{k}^T, ~\vw_k^T, ~\vv_{k}^T, ~\Delta_{f_k}^T, ~\Delta_{h_{k}}^T]^T,
\end{eqnarray}
then (\ref{Eqpre_108}) and (\ref{Eqpre_109}) can be rewritten as
\begin{eqnarray}
\label{Eqpre_111} \vx_{k+1}-\hat{\vx}_{k+1|k}&=&  \Phi_{k+1|k}(\hat{\vx}_{k+1|k})\xi\\[3mm]
\label{Eqpre_112} 0&=&  \Psi_{k+1|k}(\vy_{k})\xi,
\end{eqnarray}
where $\Phi_{k+1|k}(\hat{\vx}_{k+1|k})$ and $\Psi_{k+1|k}(\vy_{k})$ are denoted by (\ref{Eqpre_105}) and (\ref{Eqpre_106}), respectively.

Moreover, the condition that $\vx_{k+1}\in\mathcal {E}_{k+1|k}$ whenever I) $\vx_k$ is in $\mathcal {E}_{k}$
II) the process and measurement noises $\vw_k, \vv_{k}$ are bounded
in ellipsoidal sets, i.e., $\vw_k\in\mW_k$, $\vv_{k}\in\mV_{k}$ is equivalent to
\begin{eqnarray}
\label{Eqpre_113}  \xi^T\Phi_{k+1|k}(\hat{\vx}_{k+1|k})^T(\mP_{k+1|k})^{-1}\Phi_{k+1|k}(\hat{\vx}_{k+1|k})\xi\leq1,
\end{eqnarray}
whenever
\begin{eqnarray}
\label{Eqpre_114}  \parallel \vu_{k}\parallel&\leq& 1 ,\\[3mm]
\label{Eqpre_115}  \vw_k^T\mQ_k^{-1}\vw_k&\leq& 1,\\[3mm]
\label{Eqpre_116} \vv_{k}^T\mR_k^{-1}\vv_{k}&\leq& 1,\\[3mm]
\label{Eqpre_117}\parallel \Delta_{f_k}\parallel&\leq &1,\\[3mm]
\label{Eqpre_118}\parallel \Delta_{h_{k}}\parallel&\leq &1.
\end{eqnarray}
The equations (\ref{Eqpre_114})--(\ref{Eqpre_118}) is equivalent to
\begin{eqnarray}
\label{Eqpre_119}  \xi^T\diag(-1,I,0,0,0,0)\xi&\leq& 0,\\[3mm]
\label{Eqpre_120}  \xi^T\diag(-1,0,\mQ_k^{-1},0,0,0)\xi&\leq& 0,\\[3mm]
\label{Eqpre_121}  \xi^T\diag(-1,0,0,\mR_k^{-1},0,0)\xi&\leq& 0,\\[3mm]
\label{Eqpre_122}  \xi^T\diag(-1,0,0,0,I,0)\xi&\leq& 0,\\[3mm]
\label{Eqpre_123}  \xi^T\diag(-1,0,0,0,0,I)\xi&\leq& 0.
\end{eqnarray}
where $I$ and $0$ are matrices with compatible dimensions.

By  $\mathcal {S}$-procedure Lemma \ref{lem_1} and Eq.
(\ref{Eqpre_112}), a sufficient condition such that the inequalities
(\ref{Eqpre_119})-(\ref{Eqpre_123}) imply (\ref{Eqpre_113}) to hold is that
there exist scalars $\tau^y$ and nonnegative scalars
$\tau^u\geq0, \tau^w\geq0,
\tau^v\geq0,\tau^f\geq0,\tau^h\geq0$, such that
\begin{eqnarray}
\nonumber&& \Phi_{k+1|k}(\hat{\vx}_{k+1|k})^T(\mP_{k+1|k})^{-1}\Phi_{k+1|k}(\hat{\vx}_{k+1|k})\\[3mm]
\nonumber&&-\diag(1,0,0,0,0,0,0)\\[3mm]
\nonumber&&-\tau^u\diag(-1,I,0,0,0,0,0)\\[3mm]
\nonumber&&-\tau^w\diag(-1,0,\mQ_k^{-1},0,0,0,0)\\[3mm]
\nonumber&&-\tau^v\diag(-1,0,0,\mR_k^{-1},0,0,0)\\[3mm]
\nonumber&&-\tau^f\diag(-1,0,0,0,I,0,0)\\[3mm]
\nonumber&&-\tau^h\diag(-1,0,0,0,0,0,I)\\[3mm]
\label{Eqpre_124}&&-\tau^y \Psi_{k+1|k}(\vy_{k})^T \Psi_{k+1|k}(\vy_{k}) \preceq0
\end{eqnarray}
Furthermore, (\ref{Eqpre_124}) is
written in the following compact form:
\begin{eqnarray}
\label{Eqpre_125} \Phi_{k+1|k}(\hat{\vx}_{k+1|k})^T(\mP_{k+1|k})^{-1}\Phi_{k+1|k}(\hat{\vx}_{k+1|k})-\Xi-\tau^y \Psi_{k+1|k}(\vy_{k})^T \Psi_{k+1|k}(\vy_{k}) \preceq0
\end{eqnarray}
where $\Xi$ is denoted by (\ref{Eqpre_107}).

If we denote
$(\Psi_{k+1|k}(\vy_{k}))_{\bot}$ is the orthogonal complement of $\Psi_{k+1|k}(\vy_{k})$, then
(\ref{Eqpre_125}) is equivalent to
\begin{eqnarray}
\label{Eqpre_126}  &&((\Psi_{k+1|k}(\vy_{k}))_{\bot})^T\Phi_{k+1|k}(\hat{\vx}_{k+1|k})^T(\mP_{k+1|k})^{-1}\Phi_{k+1|k}(\hat{\vx}_{k+1|k})(\Psi_{k+1|k}
(\vy_{k}))_{\bot}\\
\nonumber&&~~~-((\Psi_{k+1|k}(\vy_{k}))_{\bot})^T\Xi(\Psi_{k+1|k}(\vy_{k}))_{\bot} \preceq0
\end{eqnarray}
Using Schur complements, (\ref{Eqpre_126}) is
equivalent to
\begin{eqnarray}
\label{Eqpre_127}&&\left[\begin{array}{cc}
    -\mP_{k+1|k}&\Phi_{k+1|k}(\hat{\vx}_{k+1|k})(\Psi_{k+1|k}(\vy_{k}))_{\bot}\\[3mm]
    (\Phi_{k+1|k}(\hat{\vx}_{k+1|k})(\Psi_{k+1|k}(\vy_{k}))_{\bot})^T& ~~-(\Psi_{k+1|k}(\vy_{k}))_{\bot}^T\Xi(\Psi_{k+1|k}(\vy_{k}))_{\bot}\\
\end{array}\right]\preceq0,\\[3mm]
\label{Eqpre_128}&&-\mP_{k+1|k}\prec0.
\end{eqnarray}

Therefore, if
$\hat{x}_{k+1|k}$, $\mP_{k+1|k}$ satisfy (\ref{Eqpre_127}) and
(\ref{Eqpre_128}), then the state $x_{k+1}$ belongs to $\mathcal
{E}_{k+1|k}$, whenever I) $\vx_k$ is in $\mathcal {E}_{k}$,
II) the process and measurement noises $\vw_k, \vv_{k}$ are bounded
in ellipsoidal sets, i.e., $\vw_k\in\mW_k$, $\vv_{k}\in\mV_{k}$.

Summarizing  the above results,  the computation of the
predicted  bounding ellipsoid by minimizing a
size measure $f(\mP_{k+1|k})$ (\ref{Eqpre_101}) is  Theorem \ref{thm_3}.
\end{proof}

\begin{proof}[Proof of Theorem \ref{thm_4}]: Note that we have get $\vx_{k+1}\in\mathcal {E}_{k+1|k}$ in prediction step, which is equivalent to $\vx_{k+1}=\hat{\vx}_{k+1|k}+\mE_{k+1|k}\vu_{k+1|k}$, $\parallel \vu_{k+1|k}\parallel\leq 1$,
where $\mE_{k+1|k}$ is a Cholesky factorization of $\mP_{k+1|k}$, then,
\begin{eqnarray}
\label{Eqpre_136}  \vx_{k+1}-\hat{\vx}_{k+1}&=& \hat{\vx}_{k+1|k}+\mE_{k+1|k}\vu_{k+1|k}-\hat{\vx}_{k+1}
\end{eqnarray}
and by the equations (\ref{Eqpre_2}) and (\ref{Eqpre_4})
\begin{eqnarray}
\nonumber \vy_{k+1}&=& h_{k+1}(\vx_{k+1})+\vv_{k+1}\\
\label{Eqpre_137}   &=&h_k(\hat{\vx}_{k+1|k})+\mJ_{h_{k+1}}\mE_{k+1|k}\vu_{k+1|k}+\ve_{h_k+1}+\mB_{h_{k+1}}\Delta_{h_{k+1}}+\vv_{k+1}
\end{eqnarray}
If we denote by
\begin{eqnarray}
\label{Eqpre_138} \xi=[1, ~\vu_{k+1|k}^T, ~\vv_{k+1}^T, ~\Delta_{h_{k+1}}^T]^T,
\end{eqnarray}
then (\ref{Eqpre_136}) and (\ref{Eqpre_137}) can be rewritten as
\begin{eqnarray}
\label{Eqpre_139} \vx_{k+1}-\hat{\vx}_{k+1}&=&  \Phi_{k+1}(\hat{\vx}_{k+1})\xi\\[3mm]
\label{Eqpre_140} 0&=&  \Psi_{k+1}(\vy_{k+1})\xi,
\end{eqnarray}
where $\Phi_{k+1}(\hat{\vx}_{k+1})$ and $\Psi_{k+1}(\vy_{k+1})$ are denoted by (\ref{Eqpre_133}) and (\ref{Eqpre_134}), respectively.

Moreover, the condition that $\vx_{k+1}\in\mathcal {E}_{k+1}$ whenever I) $\vx_{k+1}$ is in $\mathcal {E}_{k+1|k}$
II) measurement noises $\vv_{k+1}$ are bounded
in ellipsoidal sets, i.e., $\vv_{k+1}\in\mV_{k+1}$ is equivalent to
\begin{eqnarray}
\label{Eqpre_141}  \xi^T\Phi_{k+1}(\hat{\vx}_{k+1})^T(\mP_{k+1})^{-1}\Phi_{k+1}(\hat{\vx}_{k+1})\xi\leq1,
\end{eqnarray}
whenever
\begin{eqnarray}
\label{Eqpre_142}  \parallel \vu_{k}\parallel&\leq& 1 ,\\[3mm]
\label{Eqpre_143} \vv_{k+1}^T\mR_{k+1}^{-1}\vv_{k+1}&\leq& 1,\\[3mm]
\label{Eqpre_144}\parallel \Delta_{h_{k+1}}\parallel&\leq &1.
\end{eqnarray}
The equations (\ref{Eqpre_142})--(\ref{Eqpre_144}) is equivalent to
\begin{eqnarray}
\label{Eqpre_145}  \xi^T\diag(-1,I,0,0)\xi&\leq& 0,\\[3mm]
\label{Eqpre_146}  \xi^T\diag(-1,0,\mR_{k+1}^{-1},0)\xi&\leq& 0,\\[3mm]
\label{Eqpre_147}  \xi^T\diag(-1,0,0,I)\xi&\leq& 0,
\end{eqnarray}
where $I$ and $0$ are matrices with compatible dimensions.

By  $\mathcal {S}$-procedure Lemma \ref{lem_1} and Eq.
(\ref{Eqpre_140}), a sufficient condition such that the inequalities
(\ref{Eqpre_145})-(\ref{Eqpre_147}) imply (\ref{Eqpre_141}) to hold is that
there exist scalars $\tau^y$ and nonnegative scalars
$\tau^u\geq0, \tau^v\geq0, \tau^h\geq0$, such that
\begin{eqnarray}
\nonumber&& \Phi_{k+1}(\hat{\vx}_{k+1})^T(\mP_{k+1})^{-1}\Phi_{k+1}(\hat{\vx}_{k+1})\\[3mm]
\nonumber&&-\diag(1,0,0,0,0)\\[3mm]
\nonumber&&-\tau^u\diag(-1,I,0,0,0)\\[3mm]
\nonumber&&-\tau^v\diag(-1,0,0,\mR_{k+1}^{-1},0)\\[3mm]
\nonumber&&-\tau^f\diag(-1,0,0,0,I)\\[3mm]
\label{Eqpre_190}&&-\tau^y \Psi_{k+1}(\vy_{k+1})^T \Psi_{k+1}(\vy_{k+1}) \preceq0
\end{eqnarray}
Furthermore, (\ref{Eqpre_190}) is
written in the following compact form:
\begin{eqnarray}
\label{Eqpre_148} \Phi_{k+1}(\hat{\vx}_{k+1})^T(\mP_{k+1})^{-1}\Phi_{k+1}(\hat{\vx}_{k+1})-\Xi-\tau^y \Psi_{k+1}(\vy_{k+1})^T \Psi_{k+1}(\vy_{k+1}) \preceq0
\end{eqnarray}
where $\Xi$ is denoted by (\ref{Eqpre_135}).

If we denote
$(\Psi_{k+1}(\vy_{k+1}))_{\bot}$ is the orthogonal complement of $\Psi_{k+1}(\vy_{k+1})$, then
(\ref{Eqpre_148}) is equivalent to
\begin{eqnarray}
\nonumber && ((\Psi_{k+1}(\vy_{k+1}))_{\bot})^T\Phi_{k+1}(\hat{\vx}_{k+1})^T(\mP_{k+1})^{-1}\Phi_{k+1}(\hat{\vx}_{k+1})(\Psi_{k+1}(\vy_{k}))_{\bot}\\
\label{Eqpre_149}&&-((\Psi_{k+1}(\vy_{k+1}))_{\bot})^T\Xi(\Psi_{k+1}(\vy_{k+1}))_{\bot} \preceq0
\end{eqnarray}
Using Schur complements Lemma \ref{lem_2}, (\ref{Eqpre_149}) is
equivalent to
\begin{eqnarray}
\label{Eqpre_150}&&\left[\begin{array}{cc}
    -\mP_{k+1}&\Phi_{k+1}(\hat{\vx}_{k+1})(\Psi_{k+1}(\vy_{k+1}))_{\bot}\\[3mm]
    (\Phi_{k+1}(\hat{\vx}_{k+1})(\Psi_{k+1}(\vy_{k+1}))_{\bot})^T& ~~-(\Psi_{k+1}(\vy_{k+1}))_{\bot}^T\Xi(\Psi_{k+1}(\vy_{k+1}))_{\bot}\\
\end{array}\right]\preceq0,\\[3mm]
\label{Eqpre_151}&&-\mP_{k+1}\prec0.
\end{eqnarray}

Therefore, if
$\hat{x}_{k+1}$, $\mP_{k+1}$ satisfy (\ref{Eqpre_150}) and
(\ref{Eqpre_151}), then the state $x_{k+1}$ belongs to $\mathcal
{E}_{k+1}$, whenever I) $\vx_{k+1}$ is in $\mathcal {E}_{k+1|k}$,
II) measurement noises $ \vv_{k+1}$ are bounded
in ellipsoidal sets, i.e., $\vv_{k+1}\in\mV_{k+1}$.

Summarizing  the above results,  the computation of the
measurement update  bounding ellipsoid by minimizing a
size measure $f(\mP_{k+1})$ (\ref{Eqpre_125}) is  Theorem \ref{thm_4}.
\end{proof}


\begin{thebibliography}{10}

\bibitem{Kalman60}
R.~E. Kalman, ``A new approach to linear filtering and prediction problems,''
  {\em Transactions of ASME, Journal of Basic Engineering}, vol.~82, 1960.

\bibitem{BarShalom-Li-Kirubarajan01}
Y.~Bar-Shalom, X.~Li, and T.~Kirubarajan, {\em Estimation with Applications to
  Tracking and Navigation}.
\newblock New York: Wiley, 2001.

\bibitem{Simon06}
D.~Simon, {\em Optimal State Estimation: {Kalman}, $H_\infty$, and Nonlinear
  Approaches}.
\newblock Wiley-Interscience, 2006.

\bibitem{Gordon-Salmond-Smith93}
N.~J. Gordon, D.~J. Salmond, and A.~F.~M. Smith, ``Novel approach to
  {nonlinear/non-Gaussian Bayesian} state estimation,'' {\em IEE Proceedings
  F--Radar and Signal Processing}, vol.~140, pp.~107--113, April 1993.

\bibitem{Kong-Liu-Wong94}
A.~Kong, J.~S. Liu, and W.~H. Wong, ``Sequential imputations and {Bayesian}
  missing data problems,'' {\em Journal of the American Statistical
  Association}, vol.~89, pp.~278--288, 1994.

\bibitem{Liu-Chen98}
J.~S. Liu and R.~Chen, ``Sequential {Monte Carlo} methods for dynamic
  systems,'' {\em Journal of the American Statistical Association}, vol.~93,
  pp.~1032--1044, 1998.

\bibitem{Kotecha-Djuric03}
J.~H. Kotecha and P.~M. Djuric, ``Gaussian particle filtering,'' {\em IEEE
  Transactions on Signal Processing}, vol.~51, no.~10, pp.~2592--2601, 2003.

\bibitem{Crisan-Doucet02}
D.~Crisan and A.~Doucet, ``A survey of convergence results on particle
  filtering methods for practitioners,'' {\em IEEE Transactions on Signal
  Processing}, vol.~50, no.~3, pp.~736--746, 2002.

\bibitem{Arulampalam-Maskell-Gordon-Clapp02}
M.~S. Arulampalam, S.~Maskell, N.~Gordon, and T.~Clapp, ``A tutorial on
  particle filters for online nonlinear/non-{Gaussian} {Bayesian} tracking,''
  {\em IEEE Transactions on Signal Processing}, vol.~50, pp.~174--188, February
  2002.

\bibitem{Chen-Wang-Liu00}
R.~Chen, X.~Wang, and J.~S. Liu, ``Adaptive joint detection and decoding in
  flat-fading channels via mixture {Kalman} filtering,'' {\em IEEE Transaction
  on Information Theory}, vol.~46, pp.~2079--2094, September 2000.

\bibitem{Zheng-Niu-Varshney14}
Y.~Zheng, R.~Niu, and P.~K. Varshney, ``Sequential {Bayesian} estimation with
  censored data for multi-sensor systems,'' {\em IEEE Transactions on Signal
  Processing}, vol.~62, pp.~2626--2641, May 2014.

\bibitem{Polyak-Nazin-Durieu-Walter04}
B.~T. Polyak, S.~A. Nazin, C.~Durieu, and E.~Walter, ``Ellipsoidal parameter or
  state estimation under model uncertainty,'' {\em Automatica}, vol.~40,
  pp.~1171--1179, 2004.

\bibitem{Schweppe68}
F.~C. Schweppe, ``Recursive state estimation: Unknown but bounded errors and
  system inputs,'' {\em IEEE Transactions on Automatic Control}, vol.~AC-13,
  pp.~22--28, February 1968.

\bibitem{Bertsekas-Rhodes71}
D.~P. Bertsekas and I.~B. Rhodes, ``Recursive state estimation for a
  setmembership description of uncertainty,'' {\em IEEE Transactions on
  Automatic Control}, vol.~16, pp.~117--128, February 1971.

\bibitem{Durieu-Walter-Polyak01}
C.~Durieu, E.~Walter, and B.~T. Polyak, ``Multi-input multi-output ellipsoidal
  state bounding,'' {\em Journal of Optimization Theory and Applications,},
  vol.~111, no.~2, pp.~273--303, 2001.

\bibitem{Calafiore-ElGhaoui04}
G.~Calafiore and L.~{El Ghaoui}, ``Ellipsoidal bounds for uncertain equations
  and dynamical systems,'' {\em Automatica}, vol.~40, pp.~773--787, 2004.

\bibitem{Shen-Zhu-Song-Luo11}
X.~Shen, Y.~Zhu, E.~Song, and Y.~Luo, ``Minimizing {Euclidian} state estimation
  error for linear uncertain dynamic systems based on multisensor and
  multi-algorithm fusion,'' {\em IEEE Transactions on Information Theroy},
  vol.~57, pp.~7131--7146, October 2011.

\bibitem{Jaulin-Kieffer-Didrit-Walter01}
L.~Jaulin, M.~Kieffer, O.~Didrit, and E.~Walter, {\em Applied Interval
  Analysis}.
\newblock Springer, 2001.

\bibitem{Shamma-Tu97}
J.~S. Shamma and K.~Tu, ``Approximate set-valued observers for nonlinear
  systems,'' {\em IEEE Transactions on Automatic Control}, vol.~42,
  pp.~648--658, May 1997.

\bibitem{Yang-Li10}
F.~Yang and Y.~Li, ``Set-membership fuzzy filtering for nonlinear discrete-time
  systems,'' {\em IEEE Transaction on Systems, Man, And Cybernetics-Part B:
  Cybernetics}, vol.~40, pp.~116--124, February 2010.

\bibitem{Morrell-Stirlling03}
D.~R. Morrell and W.~C. Stirlling, ``An extended set-valued {Kalman} filter,''
  {\em Proceeding of ISIPTA}, pp.~396--407, 2003.

\bibitem{Wei-Wang-Shen10}
G.~Wei, Z.~Wang, and B.~Shen, ``Error-constrained filtering for a class of
  nonlinear time-varying delay systemswith non-gaussian noises,'' {\em IEEE
  Transaction on Automatic Control}, vol.~55, pp.~2876--2882, December 2010.

\bibitem{Scholte-Campbell03}
E.~Scholte and M.~E. Campbell, ``A nonlinear set-membership filter for on-line
  applications,'' {\em International Journal of Robust and Nonlinear Control},
  vol.~13, pp.~1337--1358, December 2003.

\bibitem{Moore66}
R.~E. Moore, {\em Interval Analysis}.
\newblock Prentice-Hall: Englewood Cliffs, NJ, 1966.

\bibitem{ElGhaoui-Calafiore01}
L.~{El Ghaoui} and G.~Calafiore, ``Robust filtering for discrete-time systems
  with bounded noise and parametric uncertainty,'' {\em IEEE Transactions on
  Automatic Control}, vol.~46, no.~7, pp.~1084--1089, 2001.

\bibitem{Nesterov-Nemirovski94}
Y.Nesterov and A.Nemirovski, ``Interior point polynomial methods in convex
  programming: Theroy and applications,'' {\em Philadelphia, PA: SIAM}, 1994.

\bibitem{Lofberg04}
J.~L\"{o}fberg, ``{YALMIP}: a toolbox for modelling and optimization in
  {Matlab},'' in {\em Proceedings of the IEEE CACSD Symposium}, (Taipei,
  Taiwan), pp.~284--289, September 2004.

\bibitem{Sturm99}
J.~F. Sturm, ``Using {SeDuMi} 1.02, a {Matlab} toolbox for optimization over
  symmetric cones,'' {\em Optimization Methods and Software}, vol.~11,
  pp.~625--653, 1999.

\bibitem{Boyd-Vandenberghe04}
S.~Boyd and L.~Vandenberghe, {\em Convex Optimization}.
\newblock Cambridge University Press, 2004.

\bibitem{Vandenberghe-Boyd96}
L.Vandenberghe and S.Boyd, ``Semidefinite programming,'' {\em SIAM Review},
  vol.~38, pp.~49--95, March 1996.

\bibitem{Selin15}
S.~D. Ahipa$\c{s}$ao$\breve{\emph{g}}$lu, ``A first-order algorithm for the
  a-optimal experimental design problem: a mathematical programming approach,''
  {\em Statistics and Computing}, vol.~25, pp.~1113--1127, 2015.

\bibitem{Spivak65}
M.~Spivak, {\em Calculus on manifolds}.
\newblock Benjamin, New York, 1965.

\bibitem{Rosenlicht68}
M.~Rosenlicht, {\em Introduction to analysis}.
\newblock Glenview, III. : Scott, Foresman.

\bibitem{Mazor-Averbuch-BarShalom-Dayan98}
E.~Mazor, A.~Averbuch, Y.~Bar-Shalom, and J.~Dayan, ``Interacting multiple
  model methods in target tracking: {A} survey,'' {\em IEEE Transactions on
  Aerospace and Electronic Systems}, vol.~34, pp.~103--123, JANUARY 1998.

\bibitem{Boyd-ElGhaoui-Feron-Balakrishnan94}
S.~Boyd, L.~E. Ghaoui, E.~Feron, and V.~Balakrishnan, {\em Linear Matrix
  Inequalities in System and Control Theory}.
\newblock Philadelphia, PA: SIAM (Studies in Applied Mathematics), June 1994.

\end{thebibliography}
\end{document}